\newlength{\hchng}
\newlength{\vchng}
\newcommand {\bc} {\begin{center}}
\newcommand {\ec} {\end{center}}
\theoremstyle{plain}
\newtheorem{thm}{Theorem}[section]
\newtheorem{defn}{Definition}[section]
\newtheorem{prop}[thm]{Proposition}
\newtheorem{rem}{Remark}[section]
\newtheorem{property}{Property}[section]
\newtheorem{claim}{Claim}
\newtheorem{claim1}{Claim}
\newenvironment{proof}[1]{\begin{trivlist} \item[] {\em Proof of #1:}}{\hfill $\Box$
                      \end{trivlist}}
\newcommand{\ud}{\,\mathrm{d}}
\newcommand{\la}{\lambda}
\newcommand{\R}{\mathbb{R}}
\newcommand{\Om}{\Omega}
\newcommand{\pa}{\partial}
\newcommand{\eps}{\epsilon}
\newcommand{\vphi}{\varphi}
\title{Uniform level set estimates for ground state eigenfunctions}
\author{Thomas Beck}
\date{\today}                                           
\begin{document}
\maketitle
\begin{abstract}
We study the behaviour of the first eigenfunction of the Dirichlet Laplacian on a planar convex domain near its maximum. We show that the eccentricity and orientation of the superlevel sets of the eigenfunction stabilise as they approach the maximum, uniformly with respect to the eccentricity of the domain itself. This is achieved by obtaining quantitatively sharp second derivative estimates, which are consistent with the shape of the superlevel sets. In particular, we prove that the eigenfunction is concave (rather than merely log concave) in an entire superlevel set near its maximum. By estimating the mixed second and third derivatives partial derivatives of the eigenfunction, we also determine the rate at which a degree $2$ Taylor polynomial approximates the eigenfunction itself. 
\end{abstract}

\section{Introduction}

We are interested in studying properties of the first eigenfunction of the Dirichlet Laplacian on a planar convex domain $\Omega$, which are uniform with respect to the eccentricity of $\Omega$. In particular, we consider the behaviour of the level sets and derivatives of the eigenfunction as we approach its maximum. After a rotation and dilation, we may assume that the projection of $\Om$ onto the $y$-axis is the smallest projection of $\Om$ onto any direction, and we write $\Om$ as
\begin{align*}
\Om = \{(x,y) \in \R^2:x\in[a,b], f_1(x) \leq y \leq f_2(x) \}.
\end{align*}
Here $f_1(x)$  and $f_{2}(x)$ are convex and concave functions on the interval $[a,b]$ respectively, with $\min_{[a,b]}f_1(x) = 0$ and $\max_{[a,b]}f_2(x) = 1$. The first eigenvalue and eigenfunction of the Dirichlet Laplacian on $\Omega$ satisfies
\begin{eqnarray*}
    \left\{ \begin{array}{rlc}
    \Delta u(x,y) & = -\la u(x,y) & \text{in } \Om \\
   u(x,y) & = 0& \text{on } \pa \Om.
    \end{array} \right.
\end{eqnarray*}
where we normalise $u(x,y)$ to be positive inside $\Omega$ and attain a maximum of $1$. Our aim is then to study the eigenfunction and in particular its superlevel sets
\begin{align*}
\Om_{c} = \{(x,y) \in \Om: u(x,y) \geq c\}
\end{align*}
as $c$ approaches $1$. Since the eigenfunction $u(x,y)$ is log concave \cite{BL}, these superlevel sets  are convex subsets of $\Om$. Therefore, by the John lemma \cite{Jo}, for each $0\leq c \leq 1$, there exists a rectangle $R_{c}$ such that $R_{c}$ is contained with $\Om_{c}$ but a dilation of $R_{c}$ about its centre (with dilation factor bounded by an absolute constant) contains $\Om_{c}$. To determine the shape of the superlevel sets of $u(x,y)$ we want to study the length and orientations of the axes of the rectangles $R_{c}$. We in particular want to obtain estimates that are valid uniformly as the eccentricity (or equivalently the length of the interval $[a,b]$) becomes large. Instead of the diameter of $\Om$, the behaviour of the eigenfunction is governed by a different length scale $L$ determined by the geometry of the domain, first introduced in \cite{Je}.
\begin{defn} \label{defn:L}
Let a concave height function of the domain be given by $h(x) = f_2(x)-f_1(x)$. The length scale $L$ is then defined as the largest value such that
\begin{align*}
h(x) \geq 1-L^{-2} 
\end{align*}
for all $x\in I$, where $I$ is an interval of length at least $L$.
\end{defn}
Since the projection of $\Om$ onto the $y$-axis is the smallest, a geometric argument (Lemma 3.1, \cite{GJ}) ensures that $\max_{[a,b]}h(x)=1$. This fact and the concavity of $h(x)$ ensures that
\begin{align*}
(b-a)^{1/3} \leq L \leq b-a,
\end{align*}
but in general $L$ may be much smaller than $b-a$ (for example in the case of a right triangle, where $L \sim (b-a)^{1/3}$). Another way of generating a length scale of the same order of magnitude as $L$ is to define it to be the diameter of the inscribed rectangle in $\Om$ with smallest first Dirichlet eigenvalue (see Remark 2.2 in \cite{Je}).
\\
\\
In \cite{Je} an associated ordinary differential operator is introduced (see Section \ref{sec:strategy}), and the eigenvalues and eigenfunctions of this operator are used to generate appropriate test functions to provide bounds on the first eigenvalue $\la$ in terms of $L$, and to estimate the location and width of the nodal line of the second eigenfunction. In \cite{GJ1}, the authors give a sharper estimate on the nodal line, and in \cite{GJ} they study the location of the maximum of the first eigenfunction of $\Om$, and its behaviour near this maximum where they again use this approximate separation of variables to relate it to the first eigenfunction of an ordinary differential operator. As a straightforward consequence of their work, it is the length scale $L$ and orientation of the domain $\Om$ prescribed above, which determine the shape of the intermediate level sets of the eigenfunction $u(x,y)$.

\begin{thm}[Jerison, Grieser-Jerison] \label{thm:GJ}
Let $c_1>0$ be a small absolute constant, and consider $c$ with $c_1<c<1-c_{1}$. Then, we may take the rectangles $R_{c}$ to have axes lying along the coordinate directions, and with lengths comparable to $L$ and $1$ in the $x$ and $y$ directions respectively.
\end{thm}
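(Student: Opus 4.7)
The plan is to derive the theorem as a consequence of the approximate separation of variables developed in \cite{Je, GJ1, GJ}. The underlying structure is that the first eigenfunction $u(x,y)$ is well-approximated, with error decaying as a power of $L^{-1}$, by a product $\Phi(x)\eigx$, where $\eigx$ is the first Dirichlet eigenfunction of the vertical slice $\{x\}\times[f_1(x),f_2(x)]$ (normalised with $\max_y \eigx = 1$) and $\Phi(x)$ is the ground state of an effective one-dimensional Schr\"odinger-type operator on $[a,b]$ whose potential is the slice eigenvalue $\la_1(x)=\pi^2/h(x)^2$. The maximum of $u$ lies within $O(L)$ of the maximum of $\Phi$, which itself sits in the region where $h(x) = 1 - O(L^{-2})$.

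Granted this setup, I would analyse the two factors separately. For the $x$-direction, the description of $\Phi$ in \cite{GJ} (via a rescaling that turns the effective operator into an Airy-type problem on scale $L$) shows that, for any $c' \in (c_1,1-c_1)$, the superlevel set $\{\Phi \geq c'\}$ is an $x$-interval of length comparable to $L$. For the $y$-direction, on the $x$-interval where $\Phi \geq c_1/2$, say, $h(x)$ differs from $1$ by only $O(L^{-2})$, so the slice eigenfunction $\eigx$ is uniformly close to $\sin(\pi(y-f_1(x))/h(x))$, and its superlevel set $\{y:\eigx \geq c''\}$ is a $y$-interval of length comparable to $1$ for $c'' \in (c_1,1-c_1)$.

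I would then take $R_c$ to be the axis-aligned rectangle whose $x$-range is the superlevel interval of $\Phi$ and whose $y$-range is the common $y$-superlevel interval of the $\eigx$ over that $x$-range, both taken at an auxiliary level shifted slightly above $c$ (for the inclusion $R_c \subset \Om_c$) or slightly below $c$ (for a fixed dilation of $R_c$ to contain $\Om_c$), the shift chosen to absorb the product approximation error. The uniform estimate $u = \Phi(x)\eigx + O(L^{-\al})$ then yields both containments with the appropriate side lengths $\sim L$ and $\sim 1$. The main obstacle is only nominal here, since the uniform control of the product approximation and the detailed behaviour of $\Phi$ are precisely the content of \cite{Je, GJ1, GJ}; the task is to translate those quantitative statements into rectangle inclusions while ensuring that neither the side-length constants nor the dilation factor degenerate as the eccentricity $L \to \infty$.
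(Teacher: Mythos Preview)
Your proposal is correct and essentially matches the paper's treatment. The paper does not give a standalone proof of Theorem~\ref{thm:GJ}; it attributes the result to \cite{Je}, \cite{GJ1}, \cite{GJ} as a ``straightforward consequence of their work,'' and the approximate separation of variables you describe (first Fourier mode $u_1(x,y)=\psi(x)e(x,y)$, effective one-dimensional operator $\mathcal{L}$, error $u_2=O(L^{-3})$) is precisely the framework the paper sets up in Section~\ref{sec:strategy} and then uses explicitly in Section~\ref{subsec:max1} to establish the finer level-set shape of Proposition~\ref{prop:max1}.
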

The content of this theorem (and the results below) is in the high eccentricity case when $L$ is large. An important feature is that the dependence on $L$ is explicit, and that the estimates are uniform among all convex domains leading to the same value of $L$ (rather than an asymptotic analysis where a fixed convex domain is \textit{stretched} to obtain a sequence of domains of high eccentricity). One reason for this is that other than the convexity of $\Om$, no other regularity assumptions are placed on the domain and so the implied constants do not depend on the regularity of $\pa\Om$. In this theorem, the implied constants depend on $c_1$, and so it does not track the level set shape as $c$ tends to $1$. Our first aim is thus to study what happens as we approach the point where $u$ attains its maximum. 
\begin{thm} \label{thm:second}
For all $0<\eps\leq \tfrac{1}{2}$, we may take the rectangle $R_{1-\eps}$ to be centred at the maximum $(x^*,y^*)$ and have axes parallel to the coordinate directions. Moreover, denoting $I_{1-\eps}^{x}$, $I_{1-\eps}^{y}$ to be the projections of $R_{1-\eps}$ onto the $x$ and $y$ directions respectively, there exists an absolute constant $C>0$ such that
 \begin{align*}
 C^{-1}\sqrt{\eps}L \leq \left|I_{1-\eps}^{x}\right| \leq C\sqrt{\eps}L, \qquad C^{-1}\sqrt{\eps} \leq \left|I_{1-\eps}^{y}\right| \leq C\sqrt{\eps}.
 \end{align*}
\end{thm}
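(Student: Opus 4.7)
The strategy is to Taylor expand $u$ to second order about its maximum $(x^*,y^*)$, check that the Hessian is nearly coordinate-diagonal with eigenvalues of the correct order in each direction, and then show that the cubic remainder is negligible on $\Omega_{1-\eps}$. Writing
\begin{align*}
u(x,y) = 1 - \tfrac12 Q(x-x^*,\,y-y^*) + R_3(x,y)
\end{align*}
with $Q$ the quadratic form associated to $-\mathrm{Hess}\,u(x^*,y^*)$, Theorem~\ref{thm:second} will follow once we know $Q$ has eigenvalues $\asymp L^{-2}$ and $\asymp 1$ with principal axes within $O(L^{-1})$ of the coordinate directions, and $|R_3(x,y)| \leq \eps/10$ for all $(x,y) \in \Omega_{1-\eps}$.

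For the Hessian bounds I would first use $-\Delta u = \la u$ together with the fact that $(x^*,y^*)$ is an interior maximum to write
\begin{align*}
-\pa_y^2 u(x^*,y^*) = \la + \pa_x^2 u(x^*,y^*) \in \bigl[\la - |\pa_x^2 u(x^*,y^*)|,\,\la\bigr].
\end{align*}
Combined with the two-sided bound $\la \in [\pi^2, \pi^2 + CL^{-2}]$ of \cite{Je} (the $y$-projection of $\Om$ is the smallest and is achieved near $x^*$) and the smallness of $|\pa_x^2 u(x^*,y^*)|$ obtained next, this yields $-\pa_y^2 u(x^*,y^*) \asymp 1$. For $\pa_x^2 u$ I would appeal to the approximate separation of variables $u(x,y) \approx v(x)\,\eigx$ of \cite{GJ}, in which $v$ satisfies a one-dimensional Schr\"odinger equation at scale $L$ and hence $v''(x^*) \asymp -L^{-2}$; transferring this to $\pa_x^2 u$ at $(x^*,y^*)$ introduces only an $O(L^{-2})$ error, leaving $-\pa_x^2 u(x^*,y^*) \asymp L^{-2}$. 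The mixed partial is controlled similarly: the leading contribution to $\pa_{xy}^2 u(x^*,y^*)$ is $v(x^*)\,\pa_x\pa_y\eigx\big|_{(x^*,y^*)}$, which is $O(L^{-1})$ because the $y$-location of the maximum of $\eigx$, namely $\tfrac12(f_1(x)+f_2(x))$, varies with $x$ at rate $O(L^{-1})$ near $x^*$.

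Writing the matrix of $Q$ with diagonal entries $A \asymp L^{-2}$, $C \asymp 1$ and off-diagonal $B = O(L^{-1})$, its eigenvalues are $\asymp L^{-2}$ and $\asymp 1$ and its principal axes make angle $O(L^{-1})$ with the coordinate axes. The level set $\{Q \leq 2\eps\}$ is therefore an ellipse with semi-axes $\asymp \sqrt{\eps}\,L$ and $\asymp \sqrt{\eps}$, tilted by $O(L^{-1})$; both its smallest coordinate-aligned bounding box and a suitable inscribed coordinate-aligned rectangle centred at the origin have side lengths comparable to $\sqrt{\eps}\,L$ and $\sqrt{\eps}$, since a direct computation shows the small tilt multiplies the coordinate-aligned dimensions of such an ellipse only by a factor $1 + O(L^{-2})$. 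Combined with the sandwich $1 - \tfrac12 Q - |R_3| \leq u \leq 1 - \tfrac12 Q + |R_3|$ on $\Omega_{1-\eps}$, this yields Theorem~\ref{thm:second} provided $|R_3| \leq \eps/10$ there.

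The main obstacle will be the cubic remainder bound, uniformly in $L$ and independent of any regularity hypothesis on $\pa\Omega$. Because $\Omega_{1-\eps}$ has $x$-extent $O(\sqrt{\eps}\,L)$ and $y$-extent $O(\sqrt{\eps})$, what is required is the \emph{anisotropic} derivative bound $|\pa_x^\alpha \pa_y^\beta u| \lesssim L^{-(3-\beta)}$ for $\alpha+\beta = 3$ in a neighbourhood of $(x^*,y^*)$, so that every cubic Taylor term becomes $O(\eps^{3/2}) = o(\eps)$. These bounds do not follow from standard Schauder theory applied to $\Omega$, but they are natural in the scaled picture: differentiating the ODE for $v$ in the separation-of-variables ansatz of \cite{GJ} and applying interior elliptic estimates on the $y$-slices of $\Omega$ near $x^*$\,---\,which are almost the full unit interval and so carry bounds independent of $L$ and of $\pa\Omega$\,---\,should produce them.
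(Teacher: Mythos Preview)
Your strategy has a genuine gap: it hinges on the anisotropic third-derivative bounds $|\pa_x^\alpha\pa_y^\beta u|\lesssim L^{-(3-\beta)}$ for $\alpha+\beta=3$, uniform over a region whose $x$-extent is comparable to $L$. These are precisely the content of Theorem~\ref{thm:third}, which the paper proves only \emph{after} Theorem~\ref{thm:second}; its proof passes through the mixed-derivative estimate on $\log u$ (Proposition~\ref{prop:mixed}) and the second-derivative bounds of Proposition~\ref{prop:second}, both of which rely on the level-set shape you are trying to establish. Your proposed independent route---differentiate the ODE for the one-dimensional profile and apply interior elliptic estimates---does not close, because the separation-of-variables ansatz controls the remainder $u_2=u-\psi e$ only by $|u_2|\le CL^{-3}$ pointwise and $\|\nabla u_2\|_{L^2(W)}\le CL^{-3}$ on unit discs $W$ (Property~\ref{prop:Fourier2}); nothing here yields \emph{pointwise} bounds on $\pa_x^2 u_2$ or $\pa_x^3 u_2$ at the scales $L^{-2}$, $L^{-3}$ you need. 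The same issue undermines your Hessian computation at the maximum: the ``$O(L^{-2})$ error'' you quote when transferring $v''(x^*)$ to $\pa_x^2 u(x^*,y^*)$ is the same order as the main term, so no lower bound $-\pa_x^2 u(x^*,y^*)\gtrsim L^{-2}$ follows; and with only $B=O(L^{-1})$ for the off-diagonal entry against $A\asymp L^{-2}$ on the diagonal, you cannot rule out $\lambda_-(Q)=o(L^{-2})$.

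The paper avoids all of this by reversing the order of the argument. It first proves the level-set shape directly for the restricted range $\eps\ge C_1L^{-3}$ (Proposition~\ref{prop:max1}), using only the integrated ODE information on $\psi$ and the $L^\infty$ bound on $u_2$---no pointwise second derivatives required. That range already forces $\Om_{1-\eps}$ into an $O(1)$ ball around the maximum. Then it obtains sharp \emph{first}-derivative bounds there (Proposition~\ref{prop:max2}), feeds them into the log-concavity inequality to give the auxiliary function $F_\nu=C_3\eps_0\alpha_\nu-\pa_\nu^2 u$ a sign, and applies a Harnack inequality to $F_\nu$ on a ball of radius $\sqrt{\eps_0}$. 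Matching the Harnack output against the already-known shape of $\Om_{1-\eps_1}$ for one value $\eps_1\asymp L^{-3}$ pins down both sides of $-\pa_\nu^2 u\asymp\alpha_\nu$ and finishes the theorem. Only afterwards are the third-derivative bounds derived.
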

We will prove this theorem in Section \ref{sec:second}. As $\eps>0$ becomes small, the theorem establishes the shape of the level sets all the way to its maximum. It is important to note that the constant $C$ is universal and so can be taken independently of $\eps$, $L$  and the domain $\Om$ itself. Moreover, the estimates are uniform up to the $\Om_{1/2}$ superlevel set, and so the theorem shows that the eccentricity and orientation of the superlevel sets $\Om_{1-\eps}$ stabilise as $\eps$ tends to $0$, and that the level at which this occurs can be taken independently of $L$ and the domain $\Om$. The estimate in the theorem is a natural expectation for the behaviour of the level sets: Consider the polynomial
\begin{align*}
P(x,y) = 1 - \frac{(x-x^*)^2}{2L^2} - \frac{(y-y^*)^2}{2}
\end{align*}
attaining its maximum of $1$ at the $(x^*,y^*)$. Then, the level sets of $P(x,y)$ satisfy the property given in the statement of Theorem \ref{thm:second}, and we can informally view the theorem as saying that the eigenfunction $u(x,y)$ resembles the behaviour of the polynomial $P(x,y)$ as we approach the maximum. In particular, a key feature in the proof of Theorem \ref{thm:second} will be to establish sharp second derivative bounds for $u(x,y)$ near to its maximum, which are consistent with those attained by the polynomial $P(x,y)$. 
\\
\\
A classical result of Brascamp-Lieb \cite{BL} ensures that the eigenfunction is log concave in the whole of $\Om$. Alternative proofs of this result (and generalisations) have also been given in \cite{CF}, \cite{K}, \cite{KL}. We will in fact show that there exists $\eps_0>0$ sufficiently small (independently of $L$), that $u(x,y)$ is concave in the whole superlevel set $\Om_{1-\eps_0}$. We will also obtain quantitative lower bounds on its second derivatives in $\Om_{1-\eps_0}$ which are of the same order of magnitude as those of $P(x,y)$ (see Proposition \ref{prop:second}).
\\
\\
If we choose the degree $2$ polynomial more appropriately, we can obtain also a stronger approximation to the eigenfunction $u(x,y)$. Denote the degree $2$ Taylor polynomial of $u(x,y)$ at $(x^*,y^*)$ to be $P_2^*(x,y)$. That is,
\begin{align} \label{eqn:P2}
P_2^*(x,y) = 1 + \tfrac{1}{2} \pa_{x}^2u(x^*,y^*) (x-x^*)^2 +  \pa_{x}\pa_{y}u(x^*,y^*)(x-x^*)(y-y^*) +
  \tfrac{1}{2} \pa_{y}^2u(x^*,y^*) (y-y^*)^2.
\end{align} 
Then, our second main result establishes that the approximation of $u(x,y)$ by $P_2^*(x,y)$ improves uniformly as $(x,y)$ approaches the maximum. More precisely, we will show the following in Section \ref{sec:third}:
\begin{thm} \label{thm:third}
There exists an absolute constant $C>0$, such that for all $(x,y)\in\Omega_{1/2}$, we have the estimate
\begin{align*}
\left|u(x,y)-P_2^*(x,y)\right| \leq C\max\left\{L^{-3}|x-x^*|^3,|y-y^*|^3\right\}.
\end{align*}
\end{thm}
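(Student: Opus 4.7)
The natural route is Taylor's theorem with remainder, centred at $(x^*,y^*)$: for $(x,y)\in\Om$,
\[
u(x,y) - P_2^*(x,y) = \frac{1}{6}\sum_{k=0}^{3}\binom{3}{k}\pa_x^k\pa_y^{3-k}u(\xi)(x-x^*)^k(y-y^*)^{3-k}
\]
for some $\xi$ on the segment from $(x^*,y^*)$ to $(x,y)$. The theorem then reduces to the sharp third derivative estimates
\[
\bigl|\pa_x^k\pa_y^{3-k}u(x,y)\bigr|\leq CL^{-k},\qquad k=0,1,2,3,
\]
uniformly in $(x,y)\in\Omega_{1/2}$. These are the natural one-derivative-further analogues of the second derivative bounds provided by Proposition \ref{prop:second}, and they encode the heuristic that, near the maximum, $u$ depends on the rescaled variables $(x-x^*)/L$ and $y-y^*$.

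Granted these estimates, each term in the Taylor remainder is bounded by $CL^{-k}|x-x^*|^k|y-y^*|^{3-k}$. Writing $a=L^{-1}|x-x^*|$ and $b=|y-y^*|$, Young's inequality gives $a^2b\leq \tfrac{2}{3}a^3+\tfrac{1}{3}b^3$ and $ab^2\leq \tfrac{1}{3}a^3+\tfrac{2}{3}b^3$, so summing the pure and mixed contributions yields
\[
\bigl|u(x,y)-P_2^*(x,y)\bigr|\leq C'\bigl(L^{-3}|x-x^*|^3+|y-y^*|^3\bigr)\leq 2C'\max\bigl\{L^{-3}|x-x^*|^3,\,|y-y^*|^3\bigr\},
\]
which is the claim.

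The main obstacle is therefore proving the third derivative bounds with the correct powers of $L$. The natural approach is to exploit the approximate separation of variables from \cite{Je,GJ,GJ1}: in a neighbourhood of the maximum, express $u(x,y)$ as $\phi(x)\psi^{(x)}(y)$ plus a controlled remainder, where $\phi$ solves an effective one-dimensional eigenvalue problem whose intrinsic length scale is $L$, and $\psi^{(x)}(y)$ is the cross-sectional Dirichlet ground state depending on $x$ only through the slow variable $x/L$; each $\pa_x$ applied to either factor then contributes a factor of $L^{-1}$. The PDE $\Delta u=-\la u$ can be differentiated to control $\pa_y^3 u$ and to trade third $y$-derivatives for second $x$-derivatives. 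The delicate technical point, where the real work lies, is to propagate the $L$-scaling cleanly through the remainder of the product ansatz uniformly on $\Omega_{1/2}$; I would combine this with interior elliptic regularity for $u$ applied on rectangles of size comparable to $L\times 1$ sitting inside $\Omega_{1/4}$, where Proposition \ref{prop:second} and the boundedness of $u$ can be bootstrapped to third derivatives away from $\pa\Om$.
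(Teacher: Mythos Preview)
Your reduction via Taylor's theorem to the third-derivative bounds $|\pa_x^k\pa_y^{3-k}u|\leq CL^{-k}$ is correct and matches the paper exactly; the paper establishes these in $\Om_{1-\eps_0}$ for a small absolute $\eps_0$, while outside that set the theorem follows crudely from Theorem~\ref{thm:GJ}.

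The gap is in your route to the third-derivative bounds. Interior elliptic regularity on $L\times 1$ rectangles does not help: rescaling sends $\Delta$ to $L^{-2}\pa_X^2+\pa_y^2$, which degenerates as $L\to\infty$, and on unit balls one only gets $|\pa_x^3 u|\leq C$. More fundamentally, the bound $|\pa_x^3 u|\leq CL^{-3}$ encodes a genuine two-order cancellation: since $\pa_x^3 u=-\la\,\pa_x u-\pa_x\pa_y^2 u$ and each summand on the right is individually only $O(L^{-1})$ in $\Om_{1-\eps_0}$, this is not a bootstrap from Proposition~\ref{prop:second}. Nor can one differentiate the product $u_1=\psi e$ three times pointwise: $\psi''$ and $\pa_x^2 e$ are controlled only in $L^1_x$ (Property~\ref{prop:Fourier1}, Remark~\ref{rem:h-first}), and for the remainder $u_2$ one has merely $|u_2|\leq CL^{-3}$ and an $L^2$ gradient bound (Property~\ref{prop:Fourier2}), far from pointwise $C^3$ control.

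The idea you are missing is to work with $\log u$. Because $\log u_1=\log\psi(x)+\log e(x,y)$, the mixed derivative $\pa_x\pa_y\log u_1=\pa_x\pa_y\log e$ drops the $\psi$-factor entirely and is $O(L^{-3})$, since $e$ depends on $x$ only through $h,f_1$ with $|h'|,|f_1'|=O(L^{-3})$. Moreover $v\coloneqq\pa_x\pa_y\log u$ satisfies the drift equation $\Delta v+2\nabla\log u\cdot\nabla v=2(\la+|\nabla\log u|^2)v$, so elliptic regularity on unit balls combined with the $L^2$ bound on $\nabla u_2$ yields $|v|+|\nabla v|\leq CL^{-3}$ throughout $\Om_{1-\eps_0}$ (this is Proposition~\ref{prop:mixed}). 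From this and Proposition~\ref{prop:second} one reads off $|\pa_x\pa_y^2 u|\leq CL^{-1}$ and $|\pa_x^2\pa_y u|\leq CL^{-2}$; the pure bounds $|\pa_y^3 u|\leq C$ and $|\pa_x^3 u|\leq CL^{-3}$ follow by differentiating $\Delta\log u=-\la-|\nabla\log u|^2$ in $x$ (the constant $\la$ disappears, which is exactly the cancellation) and converting back to $u$.
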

For those points $(x,y)$ away from the maximum, the conclusion of Theorem \ref{thm:third} follows immediately from Theorem \ref{thm:GJ}. To prove the theorem, we will therefore establish sharp bounds on the third derivatives of $u(x,y)$, which hold inside a whole superlevel set $\Omega_{1-\eps_0}$ (with $\eps_0>0$ a sufficiently small absolute constant, independent of $L$). However, to achieve this, the key quantity considered in the proof of the theorem is the mixed derivative
\begin{align*}
\pa_{x}\pa_{y}\left(\log u(x,y)\right).
\end{align*}
In the case where $u(x,y)$ is an exact separation of variables, this quantity is identically zero, and in the general case we will show that is $O(L^{-3})$ inside $\Omega_{1-\eps_0}$ (see Proposition \ref{prop:mixed}). An analogous bound on $\pa_{x}\pa_{y}u(x,y)$ itself does not hold, and so this estimate can be thought of as another indication that the eigenfunction resembles a product of two single variable functions. At the maximum $(x^*,y^*)$, the quantities $\pa_{x}\pa_{y}u(x^*,y^*)$ and $\pa_{x}\pa_{y}\left(\log u(x^*,y^*)\right)$ agree, and so the term $\pa_{x}\pa_{y}u(x^*,y^*)$ appearing in \eqref{eqn:P2} is in fact of size $O(L^{-3})$. This shows that the prescribed orientation of the domain $\Om$ lines up with the orientation of the Hessian $D^2u(x^*,y^*)$ at the maximum up to an $O(L^{-3})$ error. 
\\
\\
As well as the key work of Jerison and Grieser-Jerison  \cite{Je}, \cite{GJ1}, \cite{GJ}, we now recall some other work, which studies related problems, using similar techniques. The idea of approaching a two dimensional problem by using an approximate separation of variables and studying an associated ordinary differential operator has also been used extensively by Friedlander and Solomyak in \cite{FS1}, \cite{FS2}, \cite{FS3}. In these papers, they use this to obtain asymptotics for the eigenvalues, and the resolvent of the Dirichlet Laplacian. They use a semiclassical method by sending a small parameter $\eps$ to $0$ in order to give a one-parameter of \textit{narrow} domains, and then write asymptotics in terms of this small parameter. In Borisov-Freitas \cite{BF}, they use similar techniques to study the asymptotics of eigenfunctions and eigenvalues for a  class of planar, not necessarily convex, domains in the singular limit around a line segment. In Freitas and Krej\v ci\v r\'ik \cite{FK} they also relate the study of eigenfunctions and eigenvalues of a class of \textit{thin} two dimensional (not necessarily convex) domains, to an associated ordinary differential operator, and in particular use this to deduce properties of the nodal line of the second eigenfunction. 
\\
\\
The log concavity of the eigenfunction, and resulting convexity of its superlevel sets has been used previously in various situations. For example, in \cite{AC} moduli of convexity and concavity for the eigenfunctions of a class of Schr\"odinger operators are introduced. Under a convexity assumption on the potential, it is then possible to strengthen the log concavity of the first eigenfunction by finding an appropriate modulus of concavity, depending on the diameter of the domain. This allows them to estimate the spectral gap from below and prove the Fundamental Gap Conjecture. In Proposition \ref{prop:second} below, we also obtain a quantitative version of the log concavity near the maximum of the eigenfunction in terms of the length scale $L$, which provides a sharper estimate in the case where $L$ is much smaller than the diameter of the domain.   In \cite{GMS}, the definition of $L$ is generalised to a wider class of one dimensional Schr\"odinger operators, no longer necessarily requiring the convexity of the potential, and this is used to obtain lower bounds on the first eigenvalue.  In \cite{FJ} the convexity of the superlevel sets of the Green's function are used in a crucial way to prove third derivative estimates on the eigenfunction which are valid up to the boundary of the convex domain.
\\
\\
A natural question is to study to what extent Theorems \ref{thm:GJ}, \ref{thm:second}, \ref{thm:third} extend to three and higher dimensions. Some steps towards establishing Theorem \ref{thm:GJ} in three dimensions were given in \cite{Be1} and \cite{Be2}. In \cite{Be2} the analogous estimates for the shape of the intermediate level sets of the first eigenfunction were described for a class of two dimensional Schr\"odinger operators with convex potentials. In this case, there are two length scales $L_1$ and $L_2$ which govern the shape, and these length scales are determined by the eigenvalues of a family of associated ordinary differential operators. This was used in \cite{Be1} to prove the equivalent theorem to Theorem \ref{thm:GJ} for a class of \textit{flat} convex three dimensional domains such that the associated John parallelepiped has two large directions and one unit sized direction. 
\\
\\
In the rest of the paper we will proceed as follows. In Section \ref{sec:strategy} we will outline our strategy for the proof of Theorem \ref{thm:second}, and recall the properties of the eigenfunction from \cite{Je} and \cite{GJ} which we will need. In Section \ref{sec:second} we then prove Theorem \ref{thm:second} and also demonstrate the quantitative concavity of the eigenfunction in a whole level set near its maximum. Finally, in Section \ref{sec:third} we prove Theorem \ref{thm:third} by obtaining appropriate third derivative estimates for the eigenfunction. 
\\
\\
\textbf{Acknowledgements:} The author would like to thank David Jerison for the many useful conversations and suggestions regarding this paper and its exposition.

\section{Strategy and ingredients of the proof of Theorem \ref{thm:second}} \label{sec:strategy}

One approach to proving Theorem \ref{thm:second} is to apply a Harnack-type inequality to the second derivatives of $u(x,y)$ to show that they are of the same order of magnitude in a level set close to the maximum. One could then attempt to use the shape of the intermediate level sets in Theorem \ref{thm:GJ} to determine that the sizes of the second derivatives must match with those of $P(x,y) = 1 - \tfrac{(x-x^*)^2}{2L^2} - \tfrac{(y-y^*)^2}{2}$. However, to use such an approach there are two main issues that need to be overcome: To apply a Harnack-type inequality requires that the quantity is of one sign, and a priori, it is not clear that the second derivatives of $u(x,y)$ have a sign near to its maximum. Also, the desired eccentricities of the level sets of $u(x,y)$ are comparable to $L$, whereas the Harnack inequality applies to regions of bounded eccentricity. Since we want to sharply track the dependence on $L$, an overlapping ball type argument will not be sufficient.
\\
\\
We will get around these two issues as follows. In Proposition \ref{prop:max1} we will prove the estimates in Theorem \ref{thm:second} for an $L$-dependent range of $\eps>0$. In particular, we will establish them for $\eps \geq CL^{-3}$, which ensures that the remaining superlevel sets are contained within an $O(1)$ distance to the maximum of $u(x,y)$. This will allow us to reduce our analysis to a region which is suitable for applying a Harnack inequality. To overcome the lack of a sign for the second derivatives we will use the log concavity of $u(x,y)$ \cite{BL}. This gives the one-sided second derivative estimate of the form
\begin{align} \label{eqn:log-concave}
\pa_{\nu}^2u(x,y) \geq -\frac{\left(\pa_{\nu}u(x,y)\right)^2}{u(x,y)}
\end{align}
for every unit direction in $\nu$. In Proposition \ref{prop:max2} we will establish bounds on $\pa_{\nu}u(x,y)$ which are consistent with the desired shape of the level sets. Inserting this into \eqref{eqn:log-concave} will then allow us to apply a Harnack inequality to a quantity involving the second derivatives and complete the proof of Theorem \ref{thm:second}. 
\\
\\
In the rest of this section we introduce the associated ordinary differential operator and describe the relevant properties of the corresponding eigenfunction, which were established and follow from the work of Jerison and Grieser-Jerison \cite{Je}, \cite{GJ}. To motivate the definition of this operator, note that as $L$ increases, the height function $h(x)$ becomes more slowly varying. One can then think of the domain $\Om$ as resembling an approximate cartesian product of the interval $[a,b]$ and the intervals $[f_1(x),f_2(x)]$ of length $h(x)$. Considering this sort of approximation for the eigenfunction itself, since the first Dirichlet eigenvalue of $[f_1(x),f_2(x)]$ is equal to $\pi^2h(x)^{-2}$, a natural differential operator to consider in the $x$-direction is given by the following:
\begin{defn} \label{defn:ODE}
The ordinary differential operator $\mathcal{L}$ is given by
\begin{align*}
\mathcal{L} = -\frac{d^2}{dx^2} + V(x)
\end{align*}
with zero boundary conditions on the interval $[a,b]$. Here $V(x) = \pi^2h(x)^{-2}$ is a convex positive function, attaining a minimum of $\pi^2$.  Let $\mu$ be the first eigenvalue of $\mathcal{L}$ with corresponding eigenfunction $\vphi(x)$, normalised to be positive on $(a,b)$ with a maximum of $1$. 
\end{defn}
We can relate our two dimensional eigenfunction problem to this operator via an approximate separation of variables and a decomposition of the eigenfunction $u(x,y)$  into its first and higher Fourier modes in the $y$-direction. 
\begin{defn} \label{defn:Fourier}
Setting 
\begin{align*}
e(x,y) = \sqrt{\frac{2}{h(x)}}\sin \left(\pi\frac{y-f_1(x)}{h(x)}\right), \qquad \psi(x) =  \int_{f_1(x)}^{f_2(x)}u(x,y)e(x,y) \ud y,
\end{align*}
we decompose $u(x,y)$ as
\begin{align*}
u(x,y) = u_1(x,y) + u_2(x,y), \qquad \emph{ where } u_1(x,y) = \psi(x)e(x,y).
\end{align*}
\end{defn}

As the eccentricity of $\Omega$ increases, the height function $h(x)$ becomes slowly varying and one would expect that the behaviour of $\psi(x)$ should resemble that of $\vphi(x)$. This has indeed been confirmed in the work of  Jerison and Grieser-Jerison. In the proofs of Theorems \ref{thm:second} and \ref{thm:third}, we will use some of these estimates which relate $\la$, $u(x,y)$ to $\mu$, $\vphi(x)$, which for future reference we now list. 
\begin{property}[Eigenvalue bounds, Theorem A and Lemma 2.4(a) in \cite{Je}] \label{prop:eigenvalue}
There exists an absolute constant $C$ such that $\mu$ and $\la$ satisfy the eigenvalue bounds
\begin{align*}
\pi^2 + \frac{1}{CL^{2}} \leq \mu \leq \pi^2 + \frac{C}{L^{2}}, \qquad \pi^2 + \frac{1}{CL^{2}} \leq \la \leq \pi^2 + \frac{C}{L^{2}},
\end{align*}
and the sharper comparison estimate
\begin{align*} 
\mu \leq \la \leq \mu + CL^{-3}.
\end{align*}
\end{property}
This sharper eigenvalue estimate is established by building a test function out of $\vphi(x)$ and $e(x,y)$ in the variational formulation of $\la$.
\begin{property}[First derivative bounds, Lemma 2.4(b) in \cite{Je}, Lemma 4.3(a) in \cite{GJ}] \label{prop:first}
There exists an interval $J$ of length of length $C^{-1}L$, containing the maximum of $\vphi(x)$ and the projection of the maximum of $u(x,y)$ onto the $x$-axis, such that the first derivative bounds
\begin{align*} 
|\pa_{x}u(x,y)| \leq CL^{-1}, \qquad |\pa_{y}u(x,y)| \leq C, \qquad |\vphi'(x)| \leq CL^{-1}
\end{align*}
hold for all $(x,y)\in\Om$ with $x\in J$. 
\end{property}
These first derivative estimates are consistent with the shape of the intermediate level sets given in Theorem \ref{thm:GJ}, but are not sharp near the maximum of $u(x,y)$ and in particular not sufficient to alone establish Theorem \ref{thm:second}.
\begin{property}[First Fourier mode, (2.9) and (2.10) in \cite{GJ}] \label{prop:Fourier1}
The first Fourier mode $u_1(x,y) = \psi(x)e(x,y)$ satisfies
\begin{align*}
\left(\frac{d^2}{dx^2} - V(x)+ \mu\right)\psi(x) = \sigma(x),
\end{align*}
where $\int_{\tilde{J}}\left|\sigma(x)\right| \ud x \leq CL^{-3}$ for any interval $\tilde{J}\subset J$ of length $\leq1$. Moreover, $\psi(x)$ has a linear increase away from its maximum in the $x$-direction, which is consistent with the level set shape from Theorem \ref{thm:GJ}. More precisely,
\begin{align*}
|\psi'(x)| \geq C^{-1}L^{-2}|x| - CL^{-2}. 
\end{align*}
\end{property}
\begin{property}[Higher Fourier modes] \label{prop:Fourier2}
The eigenfunction $u(x,y)$ is well approximated by its first Fourier mode $u_1(x,y)$, since 
\begin{align*}
|u_2(x,y)| \leq CL^{-3}, \qquad  \left(\int_{W}\left|\nabla u_2(x,y)\right|^2 \ud x \ud y \right)^{1/2} \leq CL^{-3}
\end{align*}
for all $(x,y)\in\Omega$ with $x\in J$, and any unit sized disc $W$ with centre $(x_1,y_1)$, $x_1\in J$. 
\end{property}
\begin{rem} \label{rem:h-first}
By the concavity of the height function $h(x)$ and the definition of the length scale, we can ensure that on the interval $J$ the derivative bounds
\begin{align*}
\left|h'(x)\right| \leq CL^{-3}, \qquad \int_{\tilde{J}} \left|h''(x)\right| \ud x \leq CL^{-3},
\end{align*}
hold for all $x\in J$, intervals $\tilde{J} \subset J$. 
\end{rem}
\begin{proof}{Property \ref{prop:Fourier2}}
The pointwise bound on $u_2(x,y)$ is given by Theorem 2.1(a) in \cite{GJ}. To obtain the bound on $\nabla u_2(x,y)$, we first note that after a straightforward calculation, $u_2(x,y)$ satisfies the equation
\begin{align} \label{eqn:Fourier2a}
(\Delta +\la)u_2(x,y) = - F(x,y),
\end{align}
for $(x,y)\in\Om$. Here
\begin{align*}
F(x,y) = e(x,y)\sigma(x) + 2\pa_{x}e(x,y) \psi'(x) + \pa_{x}^{2}e(x,y)\psi(x),
\end{align*}
with $\sigma(x)$ as in Property \ref{prop:Fourier1}. In particular, since $h(x)$ is convex, with $|h'(x)|\leq CL^{-3}$ for $x\in J$, we have the bound
\begin{align} \label{eqn:Fourier2b}
\int_{\tilde{J}}|F(x,y)| \ud x \leq CL^{-3}
\end{align}
for any interval $\tilde{J} \subset J$ of length $\leq 1$. Now let $\chi(x,y)$ be a smooth cut-off function which is equal to $1$ inside $W$, equal to $0$ outside $2W$, the concentric double of $W$, such that it has two bounded derivatives. Then, integrating the equation in \eqref{eqn:Fourier2a} against $\chi(x,y)u_2(x,y)$, after an integration by parts, we obtain
\begin{align} \label{eqn:Fourier2c} \nonumber
 \int_{\Om}\chi(x,y)\left|\nabla u_2(x,y)\right|^2 \ud x \ud y = & \int_{\Om}-u_2(x,y)\nabla  \chi(x,y).\nabla u_2(x,y) + \la \chi(x,y)u_2(x,y)^2  \ud x \ud y \\ 
&  + \int_{\Om}F(x,y)\chi(x,y)u_2(x,y) \ud x \ud y .  
\end{align}
We can integrate by parts in the first term on the right hand to write it as
\begin{align*}
\frac{1}{2}\int_{\Om} u_2(x,y)^2 \Delta \chi(x,y) \ud x \ud y . 
\end{align*}
Thus, since $|u_2(x,y)| \leq CL^{-3}$, the first two terms on the right hand side of \eqref{eqn:Fourier2c} can be bounded by $CL^{-6}$. Finally, using $|u_2(x,y)|\leq CL^{-3}$ and the estimate in \eqref{eqn:Fourier2b} establishes the same bound for the final term in \eqref{eqn:Fourier2c}. Therefore, 
\begin{align*}
 \int_{\Om}\chi(x,y)\left|\nabla u_2(x,y)\right|^2\ud x \ud y   \leq CL^{-6},
\end{align*}
and since $\chi(x,y) = 1$ on $W$, this gives the desired estimate. 
\end{proof}
\begin{property}[Maximum location] \label{prop:max}
Assume that $\vphi(x)$ attains its maximum at $x=0$ and $u(x,y)$ attains its maximum of $1$ at $(x^*,y^*)$. Then, these maxima are close together and close to the thickest part of the domain in the sense that $|x^*|\leq C$, $|y^*-\tfrac{1}{2}|\leq CL^{-3/2}$ and 
\begin{align*}
V(x) - \mu = \frac{\pi^2}{h(x)^2} - \mu \leq - C^{-1}L^{-2}
\end{align*}
for all $x\in J$. 
\end{property}

\begin{proof}{Property \ref{prop:max}}
The estimates on $x^*$ and $V(x) - \mu$ have been established in Theorem 1.3 and Lemma 3.17 in \cite{GJ}. To estimate $|y^*-\tfrac{1}{2}|$, we first note that from Property \ref{prop:Fourier2} we have
\begin{align*}
\left|u(x^*,y) - \psi(x^*)e(x^*,y)\right| \leq CL^{-3}.
\end{align*}
Therefore, 
\begin{align*}
\left|1-\psi(x^*)e(x^*,y^*)\right| \leq CL^{-3} \qquad\text{and}\qquad \left|u\left(x^*,\tfrac{1}{2}f_1(x^*)+\tfrac{1}{2}f_2(x^*)\right) - \sqrt{2/h(x^*)}\psi(x^*)\right|\leq CL^{-3}  , 
\end{align*}
from which we obtain the estimate
\begin{align*}
1-CL^{-3}\leq \psi(x^*)e(x^*,y^*) =  \sqrt{\frac{2}{h(x^*)}}\psi(x^*) \sin \left(\pi\frac{y^*-f_1(x^*)}{h(x^*)}\right)\leq \left(1+CL^{-3}\right)\sin \left(\pi\frac{y^*-f_1(x^*)}{h(x^*)}\right) . \end{align*}
Since $h(x^*)\geq1-L^{-2}$, $0\leq f_1(x^*) \leq L^{-2}$, the desired estimate on $\left|y^*-\tfrac{1}{2}\right|$ then follows from the behaviour of the sine function around $\tfrac{\pi}{2}$. 
\end{proof}


\section{Level set estimates near the maximum} \label{sec:second}

In this section we will establish the desired properties of the level sets near to the maximum and prove Theorem \ref{thm:second}. As described at the beginning of Section \ref{sec:strategy}, the first step is to establish the level set shape for an $L$-dependent range of $\eps$.
\begin{prop} \label{prop:max1}
There exists an absolute constant $C_1$ such that Theorem \ref{thm:second} holds whenever $C_1L^{-3}\leq \eps\leq \tfrac{1}{2}$. That is, for this range of $\eps$, we may take the rectangle $R_{1-\eps}$ to have axes parallel to the coordinate directions. Denoting $I_{1-\eps}^{x}$, $I_{1-\eps}^{y}$ to be the projections of $R_{1-\eps}$ onto the $x$ and $y$ directions respectively, there exists an absolute constant $C>0$ such that
 \begin{align*}
 C^{-1}\sqrt{\eps}L \leq \left|I_{1-\eps}^{x}\right| \leq C\sqrt{\eps}L, \qquad C^{-1}\sqrt{\eps} \leq \left|I_{1-\eps}^{y}\right| \leq C\sqrt{\eps}.
 \end{align*}
 Moreover, we may take the interval $I_{1-\eps}^{y}$ to be centred at $y^*$.
 \end{prop}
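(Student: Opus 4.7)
The plan is to reduce the analysis to the approximate product $u_1(x,y) = \psi(x) e(x,y) = g(x)\sin(\pi(y - f_1(x))/h(x))$, where $g(x) := \psi(x)\sqrt{2/h(x)}$ is the $y$-envelope of $u_1$ with $\max g \approx 1$. By Property \ref{prop:Fourier2}, $|u - u_1| \leq CL^{-3}$, so for $C_1$ sufficiently large relative to $C$, whenever $\eps \geq C_1 L^{-3}$ the set $\Omega_{1-\eps}$ sits between $\{u_1 \geq 1 - (1-c_0)\eps\}$ and $\{u_1 \geq 1 - (1+c_0)\eps\}$ for some small fixed $c_0$. It therefore suffices to compute the level sets of $u_1$ at levels in $[1 - 2\eps, 1 - \eps/2]$.

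The $y$-direction is immediate from the sine structure: for each fixed $x$, $u_1(x,\cdot)$ is a scaled sine attaining maximum $g(x)$ at the midpoint $y_0(x) = \tfrac{1}{2}(f_1(x)+f_2(x))$, and expanding $\sin$ around $\pi/2$ together with $h(x) = 1 + O(L^{-2})$ on $J$ gives a symmetric level-set interval of half-width $\sim h(x)\sqrt{(g(x) - 1 + \eps)/g(x)}$, which is of order $\sqrt{\eps}$ whenever $g(x) \geq 1 - \eps/2$.

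For the $x$-extent I need to identify $\{g \geq 1 - \eps\}$. Since $|(\sqrt{2/h})'| = O(L^{-3})$ on $J$ by Remark \ref{rem:h-first}, the factor $\sqrt{2/h(x)}$ contributes only $O(\sqrt{\eps}L^{-2})$ variation over $|x - x_g| \lesssim \sqrt{\eps}L$, which is negligible compared to the $O(\eps)$ variation of $\psi$ when $\eps \geq C_1 L^{-3} \gg L^{-4}$, so the quadratic shape of $g$ tracks that of $\psi$. From Property \ref{prop:Fourier1}, $\psi'' = (V - \mu)\psi - \sigma$; Property \ref{prop:max} gives $V - \mu \leq -C^{-1}L^{-2}$ on $J$, while the normalisation $\max u = 1$ combined with Property \ref{prop:Fourier2} forces $g(x_g) \geq 1 - CL^{-3}$. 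Integrating the ODE twice from the maximum $x_\psi$ of $\psi$, the leading $(V-\mu)\psi$ term contributes $\psi(x) - \psi(x_\psi) \sim -L^{-2}(x-x_\psi)^2$, the $L^1$ forcing $\sigma$ contributes only $O(L^{-3}|x - x_\psi|^2)$, and the variation of $V - \mu$ across $J$ is absorbed in the constants. Inverting the resulting two-sided quadratic bound gives $\{g \geq 1 - \eps\} = \{|x - x_g| \leq C\sqrt{\eps}L\}$. Finally, comparing the maxima of $u$ and $u_1$ via $|u - u_1| \leq CL^{-3}$ with this quadratic bound yields $|x^* - x_g| \leq CL^{-1/2}$ and $|y^* - y_0(x^*)| \leq CL^{-3/2}$, both much smaller than $\sqrt{\eps}L$ and $\sqrt{\eps}$ respectively for $C_1$ large, so the rectangle may be recentred at $(x^*, y^*)$ at the cost only of enlarging the absolute constant.

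The main obstacle will be extracting the sharp two-sided quadratic behaviour of $\psi$ from an ODE whose forcing $\sigma$ is only $L^1$-controlled. This requires integrating $\sigma$ against the constant $1$ rather than attempting a pointwise bound, so that its contribution to $\psi'$ is of order $L^{-3}|x - x_\psi|$ and to $\psi$ of order $L^{-3}|x - x_\psi|^2$ — strictly lower order than the dominant $L^{-2}$ term on the scale $|x - x_\psi| \lesssim L$. Once this is done, the remaining perturbations (variation of $V - \mu$ along $J$, the $h$-factor linking $g$ to $\psi$, the centring shift at $(x^*, y^*)$, and the higher Fourier mode $u_2$) each combine into a small multiple of $\eps$ once $C_1$ is chosen large enough.
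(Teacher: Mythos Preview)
Your proof is correct and follows essentially the same route as the paper's: reduce to $u_1$ via the $O(L^{-3})$ remainder bound from Property~\ref{prop:Fourier2}, extract the two-sided quadratic behaviour of $\psi$ (equivalently $g$) near its maximum by integrating the ODE of Property~\ref{prop:Fourier1} using the $L^1$ control on $\sigma$ and the two-sided bound $-CL^{-2}\leq V-\mu\leq -C^{-1}L^{-2}$ on $J$, handle the $y$-extent via the sine profile, and recentre using the maximum-location estimates. The only cosmetic differences are that the paper records the intermediate first-derivative bound $|\psi'(x)|\sim L^{-2}|x-\bar{x}|+O(L^{-3})$ rather than integrating twice in one step, and that it asserts only the $y$-centring in this proposition---your sharper observation $|x^*-x_g|\lesssim L^{-1/2}$ actually yields $x$-centring on the full range $\eps\geq C_1L^{-3}$ as well, whereas the paper defers $x$-centring to the restricted range $\eps\geq C_1L^{-2}$ in Remark~\ref{rem:max1}.
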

We will now use this proposition to prove Theorem \ref{thm:second} and prove the proposition below in Section \ref{subsec:max1} using the properties of $u(x,y)$, $\vphi(x)$ and their relation given in the previous section. To deal with the remaining range of $\eps$ we will apply a Harnack inequality to a quantity involving the second derivatives of $u(x,y)$. In order to apply the Harnack inequality we need the relevant quantities to have a sign. We will ensure this by using the log concavity of $u(x,y)$, together with an estimate on its first derivatives.
\begin{prop} \label{prop:max2}
There exists an absolute constant $C_1$ such that whenever $\eps>0$ lies in the range $C_1L^{-2} \leq \eps \leq \tfrac{1}{2}$ the following estimate holds: There exists $C>0$ (independent of $\eps$) such that for all unit directions $\nu = (a,b)$, and all $(x,y)\in T_{\eps}$, the first derivatives of $u(x,y)$ satisfy
\begin{align*}
\left|\pa_{\nu}u(x,y)\right| = \left|\nu.\nabla u(x,y)\right| \leq C\sqrt{\eps}L^{-1}\max\{1,|b|L\}.
\end{align*}
Here $T_{\eps}$ is a rectangle, centred at $(x^*,y^*)$, with axes parallel to the coordinate directions, of side lengths comparable to $\sqrt{\eps}L$ and $\sqrt{\eps}$ in the $x$ and $y$ directions respectively. Moreover, for all $\eps>0$, we have the gradient estimate
\begin{align*}
\left|\nabla u(x,y)\right| \leq C\sqrt{\eps}
\end{align*}
for all $(x,y)\in\Om_{1-\eps}$. 
\end{prop}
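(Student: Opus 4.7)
The plan is to deduce both estimates from a single chord estimate for concave functions, applied to $\log u$, combining the global log-concavity of $u(x,y)$ from \cite{BL} with the level set shape established in Proposition \ref{prop:max1}. The key observation is that on any convex superlevel set $\Om_{1-\delta}$ the oscillation of $\log u$ is at most $-\log(1-\delta)\leq 2\delta$ for $\delta\leq\tfrac{1}{2}$, and concavity along any chord converts this oscillation bound into a first-derivative bound at interior points of the chord.

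For the first estimate, take $T_\eps$ to be a rectangle centred at $(x^*,y^*)$, with axes parallel to the coordinate directions and $x$- and $y$-side lengths $c_0\sqrt{\eps}L$ and $c_0\sqrt{\eps}$ respectively, for a small absolute constant $c_0>0$. Applying Proposition \ref{prop:max1} at the slightly lower level $1-C_2\eps$ (with $C_2>1$ chosen to absorb the dilation factor from John's lemma together with the $O(1)$ offset of the centre of $R_{1-C_2\eps}$ from $(x^*,y^*)$ controlled by Property \ref{prop:max}) ensures $T_\eps\subset\Om_{1-C_2\eps}$ throughout the range $C_1L^{-2}\leq\eps\leq\tfrac{1}{2}$. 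For $p\in T_\eps$ and a unit direction $\nu$, let $[p-t_-\nu,p+t_+\nu]$ be the maximal segment through $p$ in direction $\nu$ inside $\Om_{1-C_2\eps}$. Writing $g(s)=\log u(p+s\nu)$, the concavity of $g$ combined with $g\leq 0$ and $g(\pm t_\pm)\geq\log(1-C_2\eps)\geq-2C_2\eps$ yields the standard chord estimate
\begin{align*}
|\pa_\nu\log u(p)|=|g'(0)|\leq\frac{2C_2\eps}{\min(t_-,t_+)},
\end{align*}
and hence $|\pa_\nu u(p)|\leq 2C_2\eps/\min(t_-,t_+)$. By Proposition \ref{prop:max1} the chord lengths of $\Om_{1-C_2\eps}$ in the coordinate directions are of order $\sqrt{\eps}L$ and $\sqrt{\eps}$, and by choosing $c_0$ small enough every $p\in T_\eps$ stays sufficiently central that $\min(t_-,t_+)\gtrsim\sqrt{\eps}L$ in the $x$-direction chord and $\min(t_-,t_+)\gtrsim\sqrt{\eps}$ in the $y$-direction chord. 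Substituting gives $|\pa_x u(p)|\leq C\sqrt{\eps}/L$ and $|\pa_y u(p)|\leq C\sqrt{\eps}$, and decomposing $\nu=(a,b)$ as $\pa_\nu u=a\pa_x u+b\pa_y u$ delivers $|\pa_\nu u(p)|\leq C\sqrt{\eps}L^{-1}\max\{1,|b|L\}$.

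The global gradient estimate on $\Om_{1-\eps}$ follows by the same chord argument applied directly inside $\Om_{1-\eps}$: at an interior point $p$, the two coordinate-direction chords through $p$ have lengths at most the corresponding extents of $\Om_{1-\eps}$, which are of order $\sqrt{\eps}L$ and $\sqrt{\eps}$ whenever Proposition \ref{prop:max1} applies, so the chord estimate produces $|\nabla u(p)|\leq C\sqrt{\eps}$; boundary points of $\Om_{1-\eps}$ are handled by continuity from interior points of a slightly larger level set. The main obstacle in both parts is the possible loss of centrality of $p$ within its chord when $p$ lies near a corner of the level set, where $\min(t_-,t_+)$ becomes much smaller than the chord length; this is treated either by selecting the chord direction adaptively (so that $p$ remains central) or by replacing $\Om_{1-\eps}$ with $\Om_{1-\eps(1+\delta)}$ and passing $\delta\to 0^+$. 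I anticipate the residual very small range $\eps<C_1L^{-3}$ outside Proposition \ref{prop:max1} will be the subtlest case; for it the most promising route is a separate argument that uses $\nabla u(x^*,y^*)=0$ together with the first estimate applied at the fixed baseline level $\eps\sim L^{-2}$.
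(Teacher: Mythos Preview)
Your argument for the first estimate (the directional bound on $T_\eps$) is correct and is genuinely more elementary than the paper's. The paper proceeds in two pieces: it first proves $|\nabla u|\leq C\sqrt{\eps}$ on $\Om_{1-\eps}$ by establishing an inner-radius lower bound of order $\sqrt{\eps}$ for $\Om_{1-\eps}$ via a comparison function and the generalised maximum principle, followed by interior elliptic regularity after rescaling; it then upgrades $\pa_x u$ to $O(\sqrt{\eps}L^{-1})$ by reading off a boundary bound from the constancy of $u$ on $\pa\Om_{1-\eps}$ combined with the slope control on the level curve coming from Proposition~\ref{prop:max1}, and propagating that boundary estimate inward via a second maximum-principle argument with an explicit barrier $W(x,y)$. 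Your single chord inequality for $\log u$, using only log-concavity and the John rectangle of Proposition~\ref{prop:max1} (centred at $(x^*,y^*)$ by Remark~\ref{rem:max1} in this range of $\eps$), replaces both pieces at once and avoids all barrier constructions.

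The second estimate, however, has a real gap. The chord bound $|g'(0)|\leq 2C_2\eps/\min(t_-,t_+)$ needs a \emph{lower} bound on $\min(t_-,t_+)$, and neither of your proposed fixes for the centrality problem works: an adaptive direction bounds only a single directional derivative, and sending $\delta\to 0^+$ in $\Om_{1-\eps(1+\delta)}$ makes $p$ \emph{less} central, so the bound blows up rather than converging. The right variant is to run the chord in $\Om_{1-M\eps}$ for a fixed large constant $M$; then any $p\in\Om_{1-\eps}$ (contained in the outer John rectangle of $\Om_{1-\eps}$) is automatically central in the inner John rectangle of $\Om_{1-M\eps}$, and one gets $|\nabla u(p)|\leq CM\eps/(c\sqrt{M\eps})=C'\sqrt{\eps}$. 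But this only works while Proposition~\ref{prop:max1} applies at both levels, i.e.\ for $\eps\geq C_1L^{-3}$. For $\eps<C_1L^{-3}$ the best the chord method can yield (taking the ambient set at the fixed borderline level $\sim L^{-3}$) is $|\nabla u|\leq CL^{-3/2}$, which is strictly weaker than $C\sqrt{\eps}$ when $\eps\ll L^{-3}$. Your suggested rescue via $\nabla u(x^*,y^*)=0$ together with the baseline first-derivative estimate at $\eps\sim L^{-2}$ would require integrating a second-derivative bound from the maximum to $p$, and no such bound is available at this stage of the argument (Proposition~\ref{prop:second} is proved \emph{after}, and using, Proposition~\ref{prop:max2}). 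The paper's inner-radius-plus-rescaled-elliptic-regularity argument covers all $\eps>0$ precisely because it does not rely on any prior level-set shape information, and something of that nature appears unavoidable for this range.
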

We will prove Proposition \ref{prop:max2} in Section \ref{subsec:max2}. Note that for small $\eps>0$, the estimate on $\pa_{\nu}u(x,y)$ is sharper than the one given in Property \ref{prop:first}. It is also consistent with the level set shape given in Proposition \ref{prop:max1}. Assuming Propositions \ref{prop:max1} and \ref{prop:max2} for the time being we can now finish the proof of Theorem \ref{thm:second}:
\\
\\
Fix a small absolute constant $\eps_0>0$ to be specified below. For each unit direction $\nu$, let us define the function $F_{\nu}(x,y)$ by
\begin{align} \label{eqn:F-v}
F_{\nu}(x,y) =   C_3\eps_0\alpha_{\nu} - \pa_{\nu}^{2}u(x,y), 
\end{align}
where we have set $\alpha_{\nu} \coloneqq L^{-2}\left(\max\{1,|b|L\}\right)^2$. The constant $C_3$ is chosen sufficiently large (independently of $\eps_0>0$) using Proposition \ref{prop:max2} so that  $F_{\nu}(x,y)\geq0$ for all $(x,y)\in T_{\eps_0}$.  It also satisfies the equation
\begin{align} \label{eqn:Fv}
\left(\Delta + \la\right)F_{\nu}(x,y) = C_3\la\eps_0\alpha_{\nu} . 
\end{align}
We will apply the Harnack inequality to the function $F_{\nu}(x,y)$:
\begin{prop} \label{prop:Harnack}
Fix a small absolute constant $\eps_0>0$. Let $W$ be any convex subset of $\tfrac{1}{2}T_{\eps_0}$ with inner radius and diameter bounded above and below by $\sqrt{\eps_0}$ multiplied by an absolute constant. Here $\tfrac{1}{2}T_{\eps_0}$ corresponds to the set which is concentric to $T$ and half of its size. Then,
\begin{align*}
\sup_{W}F_{\nu}(x,y) \leq C_2\inf_{W}F_{\nu}(x,y) + C_2C_3\eps_0\alpha_{\nu},
\end{align*}
for some absolute constant $C_2$, independent of $\eps_0$, and the choice of $W$. 
\end{prop}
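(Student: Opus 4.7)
The approach is to apply a standard elliptic Harnack inequality for non-negative solutions of an equation of the form $(\Delta+\lambda)v=g$ with bounded constant source, and then chain the resulting local bound across $W$ using the bounded eccentricity of $W$.

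By the choice of $C_3$ (made just before the proposition, combining the log concavity bound \eqref{eqn:log-concave} with the first-derivative estimate of Proposition \ref{prop:max2}), one has $F_\nu\geq0$ on $T_{\eps_0}$, and by \eqref{eqn:Fv}, $F_\nu$ satisfies $(\Delta+\lambda)F_\nu=g$ with $g=C_3\lambda\eps_0\alpha_\nu$ a bounded positive constant. The set $\tfrac{1}{2}T_{\eps_0}$ lies at distance at least $\tfrac{1}{4}\sqrt{\eps_0}$ from $\partial T_{\eps_0}$ in both coordinate directions (the $y$-direction being the short one), so for any $p\in W$ the ball $B_r(p)$ with $r=c\sqrt{\eps_0}$ for a small absolute $c>0$ is contained in $T_{\eps_0}$. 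Since $\lambda\leq C$ by Property \ref{prop:eigenvalue} and $\eps_0\leq\tfrac{1}{2}$, the quantity $\lambda r^2$ is absolutely bounded, so the standard Harnack inequality for non-negative solutions of $\Delta v+Vv=f$ with bounded $V$ and $f$ on $B_r(p)$ gives absolute constants $C,C'>0$ with
\begin{align*}
\sup_{B_{r/2}(p)}F_\nu \;\leq\; C\inf_{B_{r/2}(p)}F_\nu + Cr^2\|g\|_\infty \;\leq\; C\inf_{B_{r/2}(p)}F_\nu + C'C_3\eps_0^2\alpha_\nu.
\end{align*}

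To extend this single-ball estimate to all of $W$, use the hypothesis that $W$ is convex with both inner radius and diameter comparable to $\sqrt{\eps_0}$. Connect the points where $F_\nu$ attains its supremum and infimum on $W$ by a path inside $W$ of length $\lesssim\sqrt{\eps_0}$, and cover it by a chain of points $q_0,\ldots,q_N$ with $|q_{i+1}-q_i|\leq r/2$, so that consecutive balls $B_{r/2}(q_i)$ overlap; the inner-radius condition on $W$ prevents degeneracy of the path and forces $N$ to be bounded by an absolute constant. Iterating the local Harnack bound along the chain yields
\begin{align*}
\sup_W F_\nu \;\leq\; C^N\inf_W F_\nu + NC^NC'C_3\eps_0^2\alpha_\nu \;\leq\; C_2\inf_W F_\nu + C_2C_3\eps_0\alpha_\nu,
\end{align*}
using $\eps_0^2\leq\eps_0$ in the last step to reach the form stated in the proposition.

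No step is a serious obstacle; the main technical point is checking that the Harnack constants depend only on absolute quantities, which reduces to $\lambda r^2\lesssim\eps_0\leq\tfrac{1}{2}$. The real conceptual work — selecting $F_\nu$ so that log concavity guarantees its non-negativity, and verifying that $(\Delta+\lambda)F_\nu$ is a constant — has already been carried out in the construction preceding the proposition, so the proposition itself amounts to a routine combination of standard elliptic regularity and an elementary geometric chaining argument.
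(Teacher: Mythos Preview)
Your proposal is correct and follows essentially the same approach as the paper. The only cosmetic difference is that the paper rescales $F_\nu$ by $\sqrt{\eps_0}$ to a unit-scale convex set and applies the Harnack inequality once there (so that the zeroth-order coefficient becomes $\eps_0\lambda$ and the right-hand side becomes $C_3\eps_0^2\lambda\alpha_\nu$), whereas you apply Harnack at the original scale on balls of radius $\sim\sqrt{\eps_0}$ and then chain; both yield the same bound with the same $\eps_0^2$ remainder, which you then relax to $\eps_0$ exactly as the paper implicitly does.
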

\begin{proof}{Proposition \ref{prop:Harnack}}
Given the set $W$, let $(x_0,y_0)$ be a point a distance comparable to $\sqrt{\eps_0}$ from the boundary  of $W$, and define $\tilde{F}_{\nu}(x,y)$ by
\begin{align} \label{eqn:tildeFv}
\tilde{F}_{\nu}(x,y) = F_{\nu}(\sqrt{\eps_0}(x-x_0),\sqrt{\eps_0}(y-y_0)).
\end{align}
Then, $\tilde{F}_{\nu}(x,y)$ is defined on the double of the convex set $\tilde{W}$ of inner radius and diameter comparable to $1$. Here $(x,y)\in\tilde{W}$ if and only if $(\sqrt{\eps_0}(x-x_0),\sqrt{\eps_0}(y-y_0))\in W$. From \eqref{eqn:Fv}, we see that it satisfies the equation 
\begin{align*}
\left(\Delta+\eps_0\la\right)\tilde{F}_{\nu}(x,y) = C_3\eps_0^2\la \alpha_{\nu}.
\end{align*}
 Since $\tilde{F}_{\nu}(x,y)\geq0$ we can therefore apply the Harnack inequality (see, for example, \cite{GT}, Theorems 8.17, 8.18) in order to get the bound
\begin{align*}
\sup_{\tilde{W}}\tilde{F}_{\nu}(x,y) \leq C_2\inf_{\tilde{W}}\tilde{F}_{\nu}(x,y) + C_2C_3\eps_0^2\alpha_{\nu}.
\end{align*}
Recalling the definition of $\tilde{F}_{\nu}(x,y)$ from \eqref{eqn:tildeFv} gives the desired bound on $F_{\nu}(x,y)$ in $W$.
\end{proof}
\begin{proof}{Theorem \ref{thm:second}}
We can now use Proposition \ref{prop:Harnack} to complete the proof of Theorem \ref{thm:second}. We first take the set $W = W_{\eps_0}$ to be equal to the ball of radius $\sqrt{\eps_0}$ centred at $(x^*,y^*)$. By Proposition \ref{prop:max1}, the superlevel sets $\Om_{1-\eps}$ resemble a rectangle of side lengths proportional to $\sqrt{\eps}L$ and $\sqrt{\eps}$, for $\eps\geq C_1L^{-3}$. Therefore, we have a value $\eps_1 = \eps_1(L)$ in this range of $\eps$ for which $\Om_{1-\eps_1}$ is contained within the set $W_{\eps_0}$. We will  show that for $\eps_0$ chosen sufficiently small, there exists a absolute constant $C$ such that for any unit direction $\nu = (a,b)$ we have the estimate
\begin{align} \label{eqn:second1}
 C^{-1}L^{-2}\left(\max\{|b|L,1\}\right)^2 \leq \inf_{W_{\eps_0}}-\pa_{\nu}^{2}u(x,y) \leq \sup_{W_{\eps_0}}-\pa_{\nu}^{2}u(x,y) \leq CL^{-2}\left(\max\{|b|L,1\}\right)^2 .
\end{align}
These second derivative bounds together with $u(x^*,y^*)=1$, $\nabla u(x^*,y^*)=0$ immediately give the desired shape of the superlevel sets inside $W_{\eps_0}$, and hence together with Proposition \ref{prop:max1} establish Theorem \ref{thm:second}.
\\
\\
For ease of notation, we will prove \eqref{eqn:second1} in the case $\nu=(1,0)$, but the same proof will work for all directions.  Suppose first that
\begin{align} \label{eqn:second2}
\sup_{W_{\eps_0}}-\pa_{x}^{2}u(x,y) \leq C_4^{-1}L^{-2},
\end{align}
for a large absolute constant $C_4$. Then, writing
\begin{align*}
1- u(x,y^*) = \int_{x}^{x^*}\pa_{\tau}u(\tau,y^*) \ud \tau =   \int_{x}^{x^*} \int_{x^*}^{\tau} \pa_{s}^2u(s,y^*) \ud s \ud \tau,
\end{align*}
we see that $1-u(x,y^*) \leq \tfrac{1}{2}\eps_1$ for all $|x-x^*|\leq \sqrt{C_4}\sqrt{\eps_1}L$. Thus, taking $C_4$ to be a sufficiently large absolute constant, this contradicts the shape of the superlevel set $\Om_{1-\eps_1}$. Hence \eqref{eqn:second2} fails for $C_4$ sufficiently large (independently of $\eps_0$). By Proposition \ref{prop:Harnack}, we obtain 
\begin{align} \label{eqn:second3} 
\sup_{W_{\eps_0}}F_{(1,0)}(x,y) \leq C_2\inf_{W_{\eps_0}}F_{(1,0)}(x,y) + C_2C_3\eps_0L^{-2} ,
\end{align}
with $F_{(1,0)}(x,y) = C_3\eps_0L^{-2} - \pa_{x}^{2}u(x,y)$. Thus, by taking $\eps_0>0$ sufficiently small, we have
\begin{align*}
-\pa_{x}^{2}u(x,y) \geq C^{-1}L^{-2}
\end{align*}
for all $(x,y)\in W_{\eps_0}$, for some absolute constant $C$.
\\
\\
Now suppose that 
\begin{align} \label{eqn:second2a}
\sup_{W_{\eps_0}}-\pa_{x}^{2}u(x,y) \geq C_4L^{-2},
\end{align}
for a large absolute constant $C_4$. Then, using \eqref{eqn:second3} again,  provided $C_4$ is sufficiently large, we have
\begin{align*}
-\pa_{x}^2u(x,y) \geq \tfrac{1}{2}C_4L^{-2}
\end{align*}
for all $(x,y)\in W_{\eps_0}$. But this again contradicts the shape of the superlevel set $\Om_{1-\eps_1}$, since by Proposition \ref{prop:max1}, $\Om_{1-\eps_1}$ contains the points $(x,y^*)$ for $x$ in an interval of length comparable to $\sqrt{\eps_1}L$ containing $x^*$.  Hence \eqref{eqn:second2a} fails for $C_4$ sufficiently large, which establishes \eqref{eqn:second1} for $\nu=(1,0)$ as required, and this completes the proof of Theorem \ref{thm:second}. 
\end{proof}

In the proof of Theorem \ref{thm:second} we established the expected second derivative bounds
\begin{align*}
C^{-1}L^{-2}\max\{1,|b|L\}^2 \leq -\pa_{v}^2u(x,y) \leq CL^{-2}\max\{1,|b|L\}^2 . 
\end{align*}
in a ball of radius $\sqrt{\eps_0}$ centred at the maximum of $u(x,y)$. However, the level set $\Omega_{1-\eps_0}$ has diameter comparable to  $\sqrt{\eps_0}L \gg 1$, and so we want to extend these bounds to a whole level set independently of the size of $L$. Since the diameter of the level set increases with $L$, we cannot use a Harnack chain-type condition to extend this bound. Instead we will use the fact that $u(x,y)$ is well-approximated by its first Fourier mode $u_1(x,y) = \psi(x) e(x,y)$ in a suitable sense (using Property \ref{prop:Fourier2}), and then obtain estimates on the derivatives of $u_1(x,y)$ itself (using Property \ref{prop:Fourier1}). 
\begin{prop} \label{prop:second}
There exist absolute constants $\eps^*, C^*>0$ such that for all unit directions  $\nu = (a,b)$, and all $(x,y)\in \Omega_{1-\eps^*}$, the second derivatives of $u(x,y)$ satisfy the upper and lower bounds
\begin{align*}
\frac{1}{C^*}L^{-2}\max\{1,|b|L\}^2 \leq -\pa_{v}^2u(x,y) \leq C^*L^{-2}\max\{1,|b|L\}^2 . 
\end{align*}
\end{prop}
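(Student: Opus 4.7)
The plan is to extend the pointwise second-derivative bounds from the unit-radius ball $W_{\eps_0}$ around the maximum (established during the proof of Theorem \ref{thm:second}) to the entire superlevel set $\Omega_{1-\eps^*}$, whose diameter grows like $\sqrt{\eps^*}L$. Since Harnack chains would lose too much across such a long region, I instead plan to compare $-\pa_\nu^2 u$ pointwise to $-\pa_\nu^2 u_1$, where $u_1 = \psi(x) e(x,y)$ is the first Fourier mode. The explicit structure of $u_1$ makes direct computation of its second derivatives possible, and the smallness of $u_2 = u - u_1$ in $L^\infty$ and $H^1$ from Property \ref{prop:Fourier2}, combined with interior elliptic estimates, is used to pass the resulting bound back to $u$ itself.

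The first step is to compute $-\pa_\nu^2 u_1$ explicitly. Since $\pa_y^2 e = -V(x) e$, we have $\pa_y^2 u_1 = -V u_1$. Substituting the ODE $\psi'' = (V - \mu)\psi + \sigma$ from Property \ref{prop:Fourier1} into $\pa_x^2 u_1 = \psi'' e + 2\psi'\pa_x e + \psi\pa_x^2 e$, and assembling $\pa_\nu^2 = a^2\pa_x^2 + 2ab\pa_x\pa_y + b^2\pa_y^2$, I obtain
\begin{align*}
-\pa_\nu^2 u_1 = \left[a^2(\mu - V(x)) + b^2 V(x)\right] u_1 + E(x,y),
\end{align*}
where $E$ collects $-a^2(\sigma e + 2\psi'\pa_x e + \psi\pa_x^2 e)$ and the mixed-derivative contribution $-2ab\pa_x\pa_y u_1$. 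The bracket is the main term: by Property \ref{prop:max}, $\mu - V \geq C^{-1}L^{-2}$ on $J$, and from the definition of $V$ together with Definition \ref{defn:L}, $\pi^2 \leq V \leq \pi^2 + CL^{-2}$ there, so $a^2(\mu - V) + b^2 V$ lies between absolute constants times $L^{-2}\max\{1,|b|L\}^2$. The error $E$ has integral controlled by $CL^{-3}$ on any unit $x$-subinterval of $J$, using the $L^1$ bound on $\sigma$ (Property \ref{prop:Fourier1}), $|\psi'| \leq CL^{-1}$ (Property \ref{prop:first}), the pointwise smallness of $|h'|$, $|f_1'|$, and the integral smallness of $|h''|$, $|f_1''|$ from Remark \ref{rem:h-first}.

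The second step is to transfer the bound to $u$. Any $p \in \Omega_{1-\eps^*}$ has $u(p) \geq 1/2$, and since $|\nabla u|$ is bounded (Property \ref{prop:first}), $p$ lies at a uniform positive distance from $\pa\Omega$, so a unit-sized disc $W \subset \Omega$ around $p$ is available for interior elliptic estimates. The remainder $u_2$ satisfies $(\Delta + \la)u_2 = G$ with $G = (\mu - \la)u_1 - F$, and by Property \ref{prop:Fourier2} we have $|u_2| \leq CL^{-3}$ and $\|\nabla u_2\|_{L^2(W)} \leq CL^{-3}$. Interior elliptic regularity on $u_2$, refined if necessary by the $y$-Fourier decomposition $u_2 = \sum_{n\geq 2} \psi_n(x) e_n(x,y)$ which benefits from the spectral gap $n^2\pi^2/h^2 - \la \geq 3\pi^2$ for $n \geq 2$, should yield a pointwise bound $|\pa_\nu^2 u_2(p)| = o(L^{-2})$ as $L \to \infty$. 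Together with the integrated control of $E$ over a unit $x$-interval around $p$, the pointwise value $-\pa_\nu^2 u(p)$ equals $[a^2(\mu - V) + b^2 V](p)\,u(p)$ up to an error of strictly smaller order than the main term. Since $u(p) \geq 1/2$, choosing $\eps^*$ to be a sufficiently small absolute constant absorbs all errors into the main term and delivers both inequalities in the statement.

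The main obstacle is the pointwise control of $\pa_\nu^2 u_2$: the term $\psi \pa_x^2 e$ in $F$ involves $h''$, which is only bounded in $L^1$, so Schauder and $W^{2,p}$ estimates for $p>2$ are not directly applicable as black boxes. I expect the resolution to proceed by first bounding $\pa_y^2 u_2$ pointwise (using that $e$ is smooth in $y$ at fixed $x$, Sobolev embedding in 1D $y$-slices, and the higher-Fourier spectral gap) and then recovering $\pa_x^2 u_2$ from the pointwise Laplace identity $\pa_x^2 u = -\la u - \pa_y^2 u$; the cross derivative $\pa_x\pa_y u$ is treated by a direct bound on $\pa_x\pa_y u_1$ plus a simpler elliptic argument for $\pa_x\pa_y u_2$.
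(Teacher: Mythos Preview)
Your decomposition $-\pa_\nu^2 u_1 = [a^2(\mu-V)+b^2 V]\,u_1 + E$ is correct and matches the content of the paper's Claim \ref{claim:G1}. The gap is in step two. You want the \emph{pointwise} identity $-\pa_\nu^2 u(p) = [\text{main}](p)\,u(p) + o(\alpha_\nu)$, but neither piece of the remainder can be controlled pointwise with the available tools. The term $E$ contains $-a^2\sigma e$, and $\sigma$ is bounded only in $L^1$ on unit $x$-intervals (Property \ref{prop:Fourier1}), not pointwise; you acknowledge this by saying $E$ ``has integral controlled by $CL^{-3}$'' but then jump to a pointwise conclusion. Likewise, the forcing in $(\Delta+\la)u_2 = -F$ contains $\psi\,\pa_x^2 e$, which involves $h''$ and $f_1''$ bounded only in $L^1$ (Remark \ref{rem:h-first}), so you cannot run $W^{2,p}$ estimates for $p>1$ to get $\pa_\nu^2 u_2$ pointwise. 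Your proposed workaround via the higher Fourier expansion does not close either: the spectral gap together with the $L^2(W)$ bound on $\nabla u_2$ gives at best $\int \sum_{n\ge 2} n^2|\psi_n(x)|^2\,dx \lesssim L^{-6}$, which does not control $\sum_n n^2|\psi_n(x)|$ pointwise; and recovering $\pa_x^2 u_2$ from the Laplace identity for $u$ needs $\pa_y^2 u_2$ pointwise first, so the argument is circular.

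The paper resolves exactly this issue with an idea you dismissed too quickly. Harnack chains across the whole level set do lose too much, but the paper applies the Harnack inequality (Proposition \ref{prop:Harnack}) \emph{locally}, inside a single square $S\subset\tfrac12 T_{\eps^*}$ of side $\sim\sqrt{\eps^*}$, and the resulting constants are uniform over the choice of $S$. Concretely, it sets $G_\nu = \pa_\nu u - a\psi' e - b\psi\,\pa_y e$, notes that $\|G_\nu\|_{L^2(S)} \le CL^{-3}$ from Property \ref{prop:Fourier2}, and shows (Claim \ref{claim:G1}) that $\int_U (\pa_\nu G_\nu - \pa_\nu^2 u)$ is comparable to $\eps^*\alpha_\nu$ on any $\sqrt{\eps^*}$-sized set $U$. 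If $-\pa_\nu^2 u$ were too large at some point of $S$, the local Harnack inequality forces it large on all of $S$; integrating $\pa_\nu G_\nu$ along $\nu$ then forces the averages of $G_\nu$ on opposite thirds of $S$ to differ by an amount incompatible with the $L^2$ bound. The same argument in the other direction gives the lower bound. In short, the local Harnack inequality is precisely the device that converts the \emph{integral} control of $E$ and the $L^2$ control of $\nabla u_2$ into a \emph{pointwise} second-derivative bound, and it is the ingredient your proposal is missing.
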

\begin{proof}{Proposition \ref{prop:second}}
As in the statement of the Harnack inequality, Proposition \ref{prop:Harnack}, we fix a small absolute constant $\eps^*>0$ to be specified below, and work in the rectangle $\tfrac{1}{2}T_{\eps^*}$ centred at $(x^*,y^*)$ and of side lengths comparable to $\sqrt{\eps^*}L$ and $\sqrt{\eps^*}$. Given a unit direction $\nu = (a,b)$, define the function $G_{\nu}(x,y)$ by
\begin{align} \label{eqn:G-nu}
G_{\nu}(x,y) =  \pa_{\nu}u(x,y)-a\psi'(x)e(x,y) - b\psi(x)\pa_{y}e(x,y).
\end{align}
Since $\pa_{\nu}u_2(x,y) - G_{\nu}(x,y) = a\psi(x)\pa_{x}e(x,y)$, and $|\pa_{x}e(x,y)|\leq CL^{-3}$ for the range of $(x,y)$ under consideration, $G_{\nu}(x,y)$ satisfies the same $L^2$-bound as $\pa_{v}u_2(x,y)$ from Property \ref{prop:Fourier2}. That is,
\begin{align} \label{eqn:G-nu1}
 \left(\int_{W}G_{\nu}(x,y)^2 \ud x \ud y \right)^{1/2} \leq CL^{-3}.
 \end{align}
 We will therefore prove the proposition by first controlling $\pa_{\nu}G_{\nu}(x,y)$ in terms of $\pa_{\nu}^2u(x,y)$, and then using the  estimate in \eqref{eqn:G-nu1} to control  $\pa_{\nu}^2u(x,y)$.   
 \begin{claim} \label{claim:G1}
 For a given small absolute constant $\eps^*>0$, let $U_{\eps^*}$ be a subset of $\tfrac{1}{2}T_{\eps^*}$ of inner radius and diameter comparable to  $\sqrt{\eps^*}$. Then, there exists a constant $C>0$ (independent of $\eps^*$) such that, 
 \begin{align*}
 C^{-1}\eps^*\alpha_{\nu} \leq \int_{U_{\eps^*}} \pa_{\nu}G_{\nu}(x,y) - \pa^{2}_{\nu}u(x,y) \ud x \ud y  \leq C\eps^*\alpha_{\nu},
 \end{align*}
 where as before $\alpha_{\nu} = L^{-2}(\max\{|b|L,1\})^2$. 
 \end{claim}
 Assuming this claim for the time being, let us complete the proof of the proposition: Let $S$ be a square inside $\tfrac{1}{2}T_{\eps^*}$ with axes along the directions $\nu$ and its perpendicular $\nu^{\perp}$, and of side length comparable to $\sqrt{\eps^*}$. We will obtain an estimate on $\pa_{\nu}^{2}u(x,y)$ inside $S$, independent of the choice of $S$ within $\tfrac{1}{2}T_{\eps^*}$. Let us denote $S_1$, $S_2$, $S_3$ to be the three disjoint rectangles in $S$ of one third the width in the $\nu$-direction and the same height in the $\nu^{\perp}$-direction. Setting $(\cdot,\cdot)_{\nu}$ to be the coordinates of a point in the $\nu$ and $\nu^{\perp}$ basis, for any $(x_1,y_1) = (s_1,t)_{\nu}\in S_1$, $(x_3,y_3) = (s_3,t)_{\nu} \in S_3$, we have
\begin{align} \label{eqn:G-nu2}
\int G((s_3,t)_{\nu})-G((s_1,t)_{\nu})\ud t =\int \left(\int_{s_1}^{s_3}\pa_{v}G((s,t)_{\nu}) \ud s\right)\ud t,
\end{align}
with $s_3>s_1$ and the integrals in $t$ over the height of $S$ in the $\nu^{\perp}$ direction.  Suppose that
\begin{align*} 
\sup_{S}-\pa_{\nu}^{2}u(x,y) \geq C_1\alpha_{\nu},
\end{align*}
for a large absolute constant $C_1$. Then, applying the Harnack inequality from Proposition \ref{prop:Harnack} to $\eps^*$ and $S$, for $C_1$ sufficiently large, we have the lower bound $-\pa_{\nu}^2u(x,y) \geq \tfrac{1}{2}C_1\alpha_{\nu}$ in $S$. Combining this with the upper bound from Claim \ref{claim:G1} gives us an upper bound on the right hand side of \eqref{eqn:G-nu2}. We obtain
\begin{align*} 
\int_{S_{3}}G_{\nu}(x,y) \ud x \ud y-\int_{S_{1}}G_{\nu}(x,y) \ud x \ud y \leq -\tfrac{1}{4}C_1\left(\eps^*\right)^{3/2}\alpha_{\nu}.
\end{align*}
However, this is incompatible with the upper $L^2$-bounds from \eqref{eqn:G-nu1} with $W = S_1,S_3$, and this contradiction gives the desired upper bound on $-\pa_{\nu}^2u(x,y)$ in $S$. For $\eps^*>0$ sufficiently small, we can obtain the desired lower bound on $-\pa_{\nu}^2u(x,y)$ in $S$ in the analogous way, combining the Harnack inequality, the lower bound from Claim \ref{claim:G1} and \eqref{eqn:G-nu2}. Thus, to complete the proof of the proposition, it is left to prove Claim \ref{claim:G1}:
\\
\\
\noindent \textit{Proof of Claim 1:} Taking the derivative of $G_{\nu}(x,y)$ in the $\nu$ direction gives
\begin{align}\label{eqn:G1a} \nonumber
\pa_{v}G_{\nu}(x,y) & =  \pa^{2}_{\nu}u(x,y)  - a^2\psi''(x)e(x,y) +b^2\tfrac{\pi^2}{h(x)^2}\psi(x)e(x,y) - a^2\psi'(x)\pa_{x}e(x,y) \\
&  - 2ab\psi'(x)\pa_{y}e(x,y) - ab \psi(x)\pa_{y}\pa_{x}e(x,y) \\ \nonumber
& = \pa^{2}_{\nu}u(x,y) + G_{1,\nu}(x,y).
\end{align}
Here we have defined $G_{1,\nu}(x,y)$ to be all the remaining terms appearing on the right hand side of \eqref{eqn:G1a} except for $\pa^{2}_{\nu}u(x,y)$, and we need to estimate its integral over $U_{\eps^*}$. By integrating the equation for $\psi(x)$ from Property \ref{prop:Fourier1} we have 
\begin{align*}
C^{-1}|\tilde{J}|L^{-2} - CL^{-3} \leq\int_{\tilde{J}}-\psi''(x) \ud x \leq C|\tilde{J}|L^{-2} + CL^{-3}
\end{align*}
for any interval $\tilde{J}$ of length $\leq1$. Moreover, as we noted in Remark \ref{rem:h-first}, $|h'(x)| \leq CL^{-3}$, and so $|\pa_{x}e(x,y)| \leq CL^{-3}$ in $U_{\eps^*}$. Thus, there exists an absolute constant $C_1$ (independent of $\eps^*$) such that
\begin{align*}
C_1^{-1}\eps^*\alpha_{\nu} \leq \int_{U_{\eps^*}}  - a^2\psi''(x)e(x,y) +b^2\frac{\pi^2}{h(x)^2}\psi(x)e(x,y) \ud x \ud y \leq C_1\eps^*\alpha_{\nu}.
\end{align*}
To complete the proof of Claim \ref{claim:G1}, it is therefore sufficient to show that
\begin{align*}
\int_{U_{\eps^*}} \left|  - a^2\psi'(x)\pa_{x}e(x,y)- 2ab\psi'(x)\pa_{y}e(x,y) - ab \psi(x)\pa_{y}\pa_{x}e(x,y) \right| \leq \tfrac{1}{2}C_1^{-1}\eps^* \alpha_{\nu}
\end{align*}
for $\eps^*>0$ sufficiently small. But since the area of $U_{\eps^*}$ is comparable to $\eps^*$, this again follows from Property \ref{prop:Fourier1} and Remark \ref{rem:h-first}, which gives the estimates
\begin{align*}
 |\pa_{x}e(x,y)| \leq CL^{-3}, \qquad |\pa_{y}\pa_{x}e(x,y)| \leq CL^{-3}, \qquad |\psi'(x)| \leq C\sqrt{\eps^*}L^{-1}. 
\end{align*}
\end{proof}

\subsection{Proof of Proposition \ref{prop:max1}} \label{subsec:max1}

To prove this proposition, we will use the behaviour of the Fourier decomposition of $u(x,y)$ described in Properties \ref{prop:Fourier1} and \ref{prop:Fourier2}. In particular, it will be important to establish the approximate linear growth away from the point where $\psi(x)$ attains its maximum. Let $x=\bar{x}$ be this point. By the  lower bound on $|\psi'(x)|$ from Property \ref{prop:Fourier1}, we have $|\bar{x}| \leq C$, and so in particular it is in the middle half of the interval $J$. Since the equation in Property \ref{prop:Fourier1} gives
\begin{align*}
\psi'(x) = \int_{\bar{x}}^{x} (V(s) - \mu)\psi(s) + \sigma(s)  \ud s,
\end{align*}
the bounds on $\sigma(x)$ there, and the bounds on $V(x)-\mu$ from Property \ref{prop:max} imply that
\begin{align} \label{eqn:max1a}
 \frac{1}{CL^2}\left|x-\bar{x}\right| - \frac{C}{L^{3}} \leq |\psi'(x)| \leq \frac{C}{L^2}\left|x-\bar{x}\right| + \frac{C}{L^{3}}. 
\end{align}
Moreover, we have the estimate
\begin{align} \label{eqn:max1b}
1-\frac{C}{L^3} \leq \sqrt{\frac{2}{h(\bar{x})}}\psi(\bar{x}) \leq 1+\frac{C}{L^3},
\end{align}
since $\max_{y}e(\bar{x},y) = \sqrt{2/h(\bar{x})}$ for all $x$, and by Property \ref{prop:Fourier2} $|u_{2}(\bar{x},y)| \leq CL^{-3}$. Combining \eqref{eqn:max1a} and \eqref{eqn:max1b}, we see that for $\eps>C_1L^{-3}$, with $C_1$ sufficiently large, we have
\begin{align*}
 \sqrt{\frac{2}{h(x)}}\psi(x) \leq 1-2\eps &\qquad  \text{ for } \left|x-\bar{x}\right| \geq C_2\sqrt{\eps}L, \\
 \sqrt{\frac{2}{h(x)}}\psi(x) \geq 1-\tfrac{1}{2}\eps &\qquad \text{ for } \left|x-\bar{x}\right| \geq \frac{1}{C_2}\sqrt{\eps}L  ,
\end{align*}
for some large absolute constant $C_2$. Since for this range of $x$ we have
\begin{align*}
u(x,y) = \psi(x)e(x,y) + u_{2}(x,y) \leq \sqrt{\frac{2}{h(x)}}\psi(x) + CL^{-3}, \\
u(x,\tfrac{1}{2}) = \psi(x)e(x,\tfrac{1}{2}) + u_{2}(x,\tfrac{1}{2}) \geq \sqrt{\frac{2}{h(x)}}\psi(x)\left(1 -CL^{-3}\right) - CL^{-3},
\end{align*}
this means that the projection of the superlevel set $\Om_{1-\eps}$ onto the $x$-axis has length comparable to $\sqrt{\eps}L$. Moreover, for $y=\tfrac{1}{2}$, the superlevel set consists of an interval of length comparable to $\sqrt{\eps}L$ centred at $x=\bar{x}$.
\\
\\
It is straightforward to also show that for $\eps\geq C_1L^{-3}$ the projection of the superlevel set $\Om_{1-\eps}$ onto the $y$-axis has length comparable to $\sqrt{\eps}$, and also that for $x=\bar{x}$ the level set consists of an interval of length comparable to $\sqrt{\eps}$ centred at $y=\tfrac{1}{2}$: This follows from the behaviour of the sine function near $\pi/2$ since 
\begin{align*}
e(x,y) = \sqrt{\frac{2}{h(x)}} \sin \left(\pi \frac{y-f_1(x)}{h(x)} \right),
\end{align*}
and the estimate $|u_2(x,y)|\leq CL^{-3}$ from Property \ref{prop:Fourier2}. Thus, to complete the proof of Proposition \ref{prop:max1}, we need to show that the rectangle $R_{1-\eps}$ may be centred on the $y$-axis at $y^*$. But for this range of $\eps$, this is immediate, since from Property \ref{prop:max}, we have the estimate $|y^*-\tfrac{1}{2}|\leq CL^{-3/2}$. 
\begin{rem} \label{rem:max1}
Since both $|\bar{x}|\leq C$ and $|x^*|\leq C$, if we restrict to $C_1L^{-2}\leq \eps \leq \tfrac{1}{2}$ instead, we can ensure that the rectangle $R_{1-\eps}$ is centred at $(x^*,y^*)$. 
\end{rem}

\subsection{Proof of Proposition \ref{prop:max2}}\label{subsec:max2}

We will prove the first derivative bounds on $u(x,y)$ from Proposition \ref{prop:max2} in two steps: The first estimate is the one for $\pa_yu(x,y)$, which will follows from an inner radius estimate for the level set $\Om_{1-\eps}$  and elliptic regularity. To obtain the bound on $\pa_{x}u(x,y)$ we will use the convexity of $\Om_{1-\eps}$ and its shape from Proposition \ref{prop:max1} to get a boundary estimate and then use a comparison function and maximum principle to extend this to the interior. 
\begin{claim1} \label{claim:max2a}
 There exists an absolute constant $C>0$ such that 
\begin{align*}
|\nabla_{x,y}u(x,y)| \leq C\sqrt{\eps}
\end{align*}
for all $\eps>0$ and all $(x,y)\in \Om_{1-\eps}$. 
\end{claim1}
To prove this claim, we will use the generalised maximum principle, which we first recall.
\begin{prop} \label{prop:GMP}
Suppose that the functions $v_1$ and $v_2$ satisfy
\begin{align*}
\Delta v_1 + c(x)v_1 = 0,  \qquad \Delta v_2 + c(x)v_2 \leq 0,
\end{align*}
in a bounded domain $D$, where $c(x)$ is a bounded, continuous function. If in addition $v_1$ and $v_2$ are continuous in $\bar{D}$, $v_1>0$ in $D$ and $v_2>0$ in $\bar{D}$, with $v_1\leq v_2$ on $\pa D$, then
\begin{align*}
v_1 \leq v_2 \text{ in } D.
\end{align*}
\end{prop}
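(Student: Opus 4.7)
The plan is to reduce this to the classical maximum principle (for an elliptic operator with no zeroth-order term) by forming the ratio $w(x) = v_1(x)/v_2(x)$, which is well-defined and continuous on $\bar D$ because $v_2>0$ on $\bar D$. The desired conclusion $v_1 \le v_2$ in $D$ is equivalent to $w \le 1$ in $D$, and the boundary hypothesis gives $w \le 1$ on $\partial D$, so it suffices to forbid an interior maximum of $w$ that strictly exceeds $1$.

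To that end, I would compute the equation satisfied by $w$. Writing $v_1 = w v_2$ and expanding,
\begin{align*}
\Delta v_1 = (\Delta w) v_2 + 2 \nabla w \cdot \nabla v_2 + w \Delta v_2 .
\end{align*}
Substituting $\Delta v_1 = -c v_1 = -c w v_2$ and rearranging,
\begin{align*}
(\Delta w) v_2 + 2 \nabla w \cdot \nabla v_2 \;=\; -w \bigl(\Delta v_2 + c v_2\bigr) \;\ge\; 0 ,
\end{align*}
where the inequality uses $\Delta v_2 + c v_2 \le 0$ and $w = v_1/v_2 > 0$ in $D$. Dividing through by $v_2 > 0$ gives
\begin{align*}
\Delta w \;+\; \frac{2 \nabla v_2}{v_2}\cdot \nabla w \;\ge\; 0 \qquad \text{in } D .
\end{align*}
This is a linear elliptic inequality $\Delta w + \vec b \cdot \nabla w \ge 0$ with a bounded drift $\vec b = 2\nabla v_2/v_2$ (bounded because $v_2 \in C^2(D) \cap C(\bar D)$ with $\inf_{\bar D} v_2 > 0$) and \emph{no} zeroth-order term.

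I would then invoke the weak maximum principle for such operators (see, e.g., \cite{GT}, Theorem 3.1), which asserts that a subsolution of this form attains its supremum on $\partial D$. Combining with $w \le 1$ on $\partial D$ yields $w \le 1$ throughout $D$, i.e.\ $v_1 \le v_2$ in $D$, as required. The only subtlety is confirming the regularity needed to apply the classical maximum principle, namely that $w$ is continuous on $\bar D$, $C^2$ in $D$, and that the coefficient $\vec b$ is bounded; all of these follow from the standing hypotheses together with $v_2 > 0$ on the closure $\bar D$. There is no serious obstacle in this argument, since the positivity of $v_2$ on $\bar D$ (rather than merely on $D$) is exactly what is needed to eliminate the zeroth-order term cleanly.
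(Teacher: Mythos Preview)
Your proposal is correct and is exactly the approach the paper indicates: the paper does not give a full argument but simply cites \cite{PW}, Theorem 10, and remarks that the proof ``follows from applying the usual maximum principle to the function $v_1/v_2$,'' which is precisely the ratio computation you carry out. One small technical point: from the stated hypotheses $\nabla v_2$ is only guaranteed to be \emph{locally} bounded in $D$, not globally bounded, so your drift $\vec b = 2\nabla v_2/v_2$ is a priori only locally bounded; this is harmless for the maximum principle argument (one can work on compact exhaustions, or simply note that at an interior maximum of $w$ the drift term vanishes anyway), but your justification of boundedness as written is slightly imprecise.
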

\begin{rem} \label{rem:GMP}
This is proven in \cite{PW}, Theorem 10, page 73, and follows from applying the usual maximum principle to the function $v_1/v_2$. 
\end{rem}
\begin{proof}{Claim \ref{claim:max2a}}
We first show that the superlevel set $\Om_{1-\eps}$ has inner radius at least $c\eps^{1/2}$, for a  small absolute constant $c>0$, which is independent of $\eps$. To do this, suppose that after a translation and rotation, $\Om_{1-\eps}$ is contained between the lines $y=\pm\alpha$, where $\alpha < c_1 \sqrt{\eps}$ for a small absolute constant $c_1$ to be chosen below. We use the comparison function
\begin{align*}
 \tilde{v}(x,y) \coloneqq \left(1-\tfrac{1}{2}\eps \right) \sin\left(\frac{\pi}{2} + \frac{\sqrt{\eps}y}{C_1\alpha}\right),
 \end{align*}
where $C_1>0$ is chosen (independently of $\alpha$ and $\eps$) so that $\tilde{v}(x,y) \geq 1-\eps$ for  all $(x,y)$ with $y = \pm\alpha$. Since $u(x,y) = 0$ on $\pa\Om$, denoting $\Om^{(\alpha)}$ to be the part of $\Om$ contained between $y=\pm\alpha$, this means that
\begin{align} \label{eqn:yloweruniform1}
u(x,y) \leq \tilde{v}(x,y)\qquad \text{for } (x,y) \in \pa\Om^{(\alpha)}.
\end{align}
 Moreover, the function $\tilde{v}(x,y)$ satisfies
\begin{align*}
\left(\Delta_{x,y} + \la \right)\tilde{v}(x,y) = - \left(\frac{\eps}{C_1^2\alpha^2}\right)\tilde{v}(x,y) + \la \tilde{v}(x,y).
\end{align*}
By Property \ref{prop:eigenvalue} we have $\la \leq \pi^2 +  CL^{-2}$, for some absolute constant $C>0$, and so
\begin{align} \label{eqn:yloweruniform2}
\left(\Delta_{x,y}  + \la\right) \tilde{v}(x,y) <\left(- \left(\frac{\eps}{C_1^2\alpha^2}\right) +\pi^2 + CL^{-2}\right)\tilde{v}(x,y) .
\end{align}
Provided $\alpha <c_1\sqrt{\eps}$, for $c_1$ sufficiently small (depending only on $C_1$ and $C$), we can ensure from \eqref{eqn:yloweruniform2} that
\begin{align} \label{eqn:yloweruniform3}
\left(\Delta_{x,y}  + \la\right)\tilde{v}(x,y) < 0.
\end{align}
Combining \eqref{eqn:yloweruniform1} and \eqref{eqn:yloweruniform3}, we see  by the generalised maximum principle in Proposition \ref{prop:GMP} that
\begin{align*}
u(x,y) \leq \tilde{v}(x,y) \qquad \text{for } (x,y)\in\Om^{(\alpha)}.
\end{align*}
However, $\tilde{v}(x,y) \leq 1-\frac{1}{2}\eps$ everywhere, whereas we know that $u(x,y)$ attains its maximum of $1$ for some $(x,y) \in\Om^{(\alpha)}$. Thus, $\alpha \geq c_1\sqrt{\eps}$ for some absolute constant $c_1$. Since $\Om_{1-\eps}$ is convex, this estimate (and the John lemma) is sufficient to establish the desired inner radius lower bound for $\Om_{1-\eps}$. 
\\
\\
For $\eps>0$ given, we now let $(x_1,y_1)\in \Om_{1-\eps}$ be a point a distance comparable to $\sqrt{\eps}$ from $\pa\Om_{1-\eps}$, and define the function $v(x,y)$ by
\begin{align*}
v(x,y) = \eps^{-1}\left(u\left(\sqrt{\eps}(x-x_1),\sqrt{\eps}(y-y_1)\right) - (1-\eps)\right).
\end{align*}
Then, this function satisfies
\begin{align*}
\Delta v(x,y) = -\eps\la v(x,y) - (1-\eps)\la,
\end{align*}
in the convex set $\tilde{\Om}_{1-\eps} = \{(x,y)\in\R^2: v(x,y) \geq 0\}$, and attains a maximum of $1$. By the above inner radius estimate, this set has inner radius bounded below by an absolute constant (uniformly in $\eps$), and so interior elliptic estimates show that the derivatives of $v(x,y)$ are bounded. Translating these bounds back to $u(x,y)$ proves the claim. 
\end{proof}

Since for the unit direction $\nu = (a,b)$, we have
\begin{align*}
\pa_{\nu}u(x,y) = a\pa_{x}u(x,y) + b\pa_{y}u(x,y),
\end{align*}
 to prove Proposition \ref{prop:max2} it is now sufficient to establish the following claim. 
\begin{claim1} \label{claim:max2b}
 There exist  absolute constants $C_1,C_2>0$ such that whenever $\eps$ is in the range $C_1L^{-2} \leq \eps \leq \tfrac{1}{2}$ the following holds: For all $(x,y)\in\Om_{1-\eps}$, such that $x$ is in an interval of length $C_2^{-1}\sqrt{\eps}L$, centred at $x^*$ we have the estimate
\begin{align*}
|\pa_{x}u(x,y)| \leq C_2\sqrt{\eps}L^{-1}.
\end{align*}
\end{claim1}
\begin{proof}{Claim \ref{claim:max2b}}
On the boundary of the superlevel set $\Om_{1-\eps}$, $u(x,y)$ is constant. Therefore, if we parameterise part of $\pa\Om_{1-\eps}$ by $y=g(x)$, we have
\begin{align} \label{eqn:x-deriv1}
(\pa_{x}u)(x,g(x)) + g'(x)(\pa_{y}u)(x,g(x)) = 0.
\end{align}
By Proposition \ref{prop:max1} and Remark \ref{prop:max1}, for this range of $\eps$, $\Om_{1-\eps}$ contains a rectangle centred at $(x^*,y^*)$ with axes along the coordinate directions of lengths comparable to $\sqrt{\eps}L$ and $\sqrt{\eps}$. Moreover, $\Om_{1-\eps}$ is convex. Combining this with \eqref{eqn:x-deriv1} ensures that there exists an interval $I_{\eps}$ centred at $x^*$ of length satisfying
\begin{align*}
C_2^{-1}\sqrt{\eps}L \leq \left|I_{\eps}\right| \leq C_2\sqrt{\eps}L
\end{align*}
such that
\begin{align} \label{eqn:x-deriv2}
|\pa_{x}u(x,y)| \leq C_2\sqrt{\eps}L^{-1}
\end{align}
for all $(x,y)\in\pa\Om_{1-\eps}$ with $x\in I_{\eps}$, and for some absolute constant $C_2$. Let $U_{\eps}$ equal the subset of $\Om_{1-\eps}$ such that $x\in I_{\eps}$. To complete the proof of the claim, we will extend the bound inside the level set, and show that the estimate in \eqref{eqn:x-deriv2} holds for all $(x,y)\in U_{\eps}$. By Proposition \ref{prop:max1}, the set $U_{\eps}$ is contained between the two horizontal lines $y = \tfrac{1}{2} \pm \tfrac{1}{4}C_3\sqrt{\eps}$, for some constant $C_3$ (independent of $\eps$). The boundary of $U_{\eps}$ consists of $4$ curves, where $\pa_{x}u(x,y)$ satisfies the bound from \eqref{eqn:x-deriv2} on the upper and lower boundaries, and by Property \ref{prop:first}
\begin{align*}
|\pa_{x}u(x,y)| \leq CL^{-1}
\end{align*}
on the part  of $\pa U_{\eps}$ consisting of the two vertical line segments. We now define a comparison function $W(x,y)$ by
\begin{align*}
W(x,y) = \frac{C_4\sqrt{\eps }}{L}\cosh \left(\frac{x-x^*}{\sqrt{\eps L}}\right)\sin \left(\frac{\pi}{2} + \frac{ \pi(y-\tfrac{1}{2})}{C_3\sqrt{\eps}}\right).
\end{align*}
Here $C_4>0$ is chosen sufficiently large so that
\begin{align*}
W(x,y) \geq C_2\sqrt{\eps} L^{-1}
\end{align*}
whenever $y\in[\tfrac{1}{2}-\tfrac{1}{4}C_3\sqrt{\eps},\tfrac{1}{2}+\tfrac{1}{4}C_3\sqrt{\eps}]$. Also, for this range of $y$, and $x\notin I_{\eps}$ we have
\begin{align*}
W(x,y) \geq \frac{C_4\sqrt{\eps}}{2L}\cosh(c\sqrt{L}) \geq CL^{-1}.
\end{align*}
Thus, in particular,
\begin{align} \label{eqn:x-deriv3}
W(x,y) \geq |\pa_{x}u(x,y)| \qquad \text{for } (x,y)\in \pa U_{\eps}.
\end{align}
The comparison function satisfies the equation
\begin{align} \label{eqn:x-deriv4}
(\Delta+\la)W(x,y) = \left(\frac{1}{\eps L} - \frac{\pi^2}{C_3^2\eps} + \la\right)W(x,y) ,
\end{align}
where the right hand side is negative for $(x,y)\in U_{\eps}$ and $\eps>0$ sufficiently small. Thus, combining \eqref{eqn:x-deriv3} and \eqref{eqn:x-deriv4}, we may apply the generalised maximum principle, Proposition \ref{prop:GMP}, to $W(x,y)$ and $\pa_{x}u(x,y)_{\pm}$ to conclude that
\begin{align*}
W(x,y) \geq \pa_{x}u(x,y)_{\pm}  \qquad \text{for } (x,y)\in  U_{\eps}.
\end{align*}
In particular, $|\pa_{x}u(x^*,y)|\leq C_4\sqrt{\eps}L^{-1}$ for $(x^*,y) \in \Om_{1-\eps}$. Since we can apply the same argument with $W(x,y)$ shifted by an amount comparable to $\sqrt{\eps}L$ in the $x$-direction, we have the desired bound on $\pa_{x}u(x,y)$, and this completes the proof of the claim.
\end{proof}

\section{Quadratic approximation to the eigenfunction} \label{sec:third}

In this section we will prove Theorem \ref{thm:third}, by showing that the eigenfunction $u(x,y)$ is sufficiently well approximated by its degree $2$ Taylor polynomial centred at its maximum $(x^*,y^*)$. As we remarked after the statement of the theorem, we only need to work inside a superlevel set $\Om_{1-\eps_0}$, where $\eps_0>0$ is a small absolute constant to be specified below. The key technical input will be the following proposition, which establishes bounds on the mixed partial derivatives of $u(x,y)$ near to $(x^*,y^*)$. 
\begin{prop} \label{prop:mixed}
There exists absolute constants $\eps_0, C_{0}>0$, such that for all $(x,y)\in\Omega_{1-\eps_0}$ the estimate
\begin{align*}
\left|\pa_{x}\pa_{y}\left(\log u(x,y)\right)\right| + \left|\nabla\pa_{x}\pa_{y}\left(\log u(x,y)\right)\right|  \leq C_{0}L^{-3}
\end{align*}
holds. 
\end{prop}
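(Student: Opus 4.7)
The plan is to treat $v := \log u$ as a small perturbation of $v_1 := \log u_1 = \log \psi(x) + \log e(x,y)$. Since $u \geq 1/2$ on $\Omega_{1-\eps_0}$, $v$ is smooth there and satisfies $\Delta v + |\nabla v|^2 = -\la$. The quantity $w := \pa_x\pa_y v$ then splits as $\pa_x\pa_y v_1 + \pa_x\pa_y(v-v_1)$: the first piece is computed by hand and shown to be $O(L^{-3})$, while the second is controlled using Property \ref{prop:Fourier2} together with interior elliptic regularity for $u_2$.

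For the first piece, observe that $\pa_y \log \psi = 0$, so $\pa_x\pa_y v_1 = \pa_x\pa_y \log e(x,y)$. Writing $\log e = \text{const} - \tfrac{1}{2}\log h(x) + \log \sin \xi$ with $\xi = \pi(y-f_1(x))/h(x)$, a direct calculation shows that every term in $\pa_x\pa_y \log e$ carries a factor of $h'(x)$ or $f_1'(x)$. By Remark \ref{rem:h-first}, $|h'| \leq CL^{-3}$ on $J$, and the same elementary convexity argument that produces this bound applies to $f_1$ and $1-f_2$: both are non-negative, convex, and by $h \geq 1 - L^{-2}$ on an interval of length $\geq L$ together with $0 \leq f_1, 1-f_2 \leq 1$, both are also $\leq L^{-2}$ on this interval; this forces $|f_1'|, |f_2'| \leq CL^{-3}$ on (a subinterval of) $J$ containing the projection of $\Omega_{1-\eps_0}$. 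Combining these estimates yields $|\pa_x\pa_y v_1| \leq CL^{-3}$, and analogous bounds on $|\nabla \pa_x\pa_y v_1|$ follow using the integral bounds on $h''$ (and the corresponding integral bounds on $f_1''$, $f_2''$ that the same convexity argument provides).

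For the perturbation, write $u = u_1(1+\phi)$ with $\phi = u_2/u_1$. Since $|u_2| \leq CL^{-3}$ by Property \ref{prop:Fourier2} and $u_1 \geq 1/2 - O(L^{-3})$ on the region in question, $|\phi| \leq CL^{-3}$, and
\[
\pa_x\pa_y(v-v_1) = \frac{\phi_{xy}}{1+\phi} - \frac{\phi_x\phi_y}{(1+\phi)^2}.
\]
To bound the right-hand side by $CL^{-3}$, one needs pointwise control of $\nabla u_2$ and $\nabla^2 u_2$ at the $L^{-3}$ scale. This is obtained by applying interior $W^{2,p}$ (or $C^{1,\alpha}$) estimates to the equation $(\Delta+\la)u_2 = -F$ from \eqref{eqn:Fourier2a}, exploiting the explicit formula $F = e\sigma + 2\pa_x e \cdot \psi' + \pa_x^2 e \cdot \psi$ together with Property \ref{prop:Fourier1} and Remark \ref{rem:h-first} to control $F$ in $L^p$ at a scale compatible with $L^{-3}$.

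Finally, to handle the gradient bound on $w$, differentiate $\Delta v + |\nabla v|^2 = -\la$ in $\pa_x\pa_y$ to obtain the linear elliptic equation
\[
\Delta w + 2\nabla v \cdot \nabla w - 2(\la + |\nabla v|^2)w = 0,
\]
whose first-order coefficients are $L^\infty$-bounded on $\Omega_{1-\eps_0}$ by Proposition \ref{prop:max2}. Interior Schauder estimates (applied on, say, $\Omega_{1-\eps_0/2}$ to propagate information from a slightly larger superlevel set) then upgrade $\|w\|_{L^\infty} \leq CL^{-3}$ to $\|\nabla w\|_{L^\infty} \leq CL^{-3}$. The hardest step is expected to be the previous one: Property \ref{prop:Fourier2} supplies only $L^\infty$ and $L^2$-gradient estimates on $u_2$ at the $L^{-3}$ scale, and passing to pointwise control of $\nabla^2 u_2$ at the same scale requires choosing the correct rescaling of the elliptic estimate so that the $L^{-3}$ smallness is not lost in the constants of Calderón–Zygmund.
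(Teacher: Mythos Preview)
Your decomposition $v=\log u$, $v_1=\log u_1$, the pointwise bound $|\pa_x\pa_y v_1|\le CL^{-3}$, and the final step (deriving the homogeneous elliptic equation for $w=\pa_x\pa_y v$ and using interior estimates to pass from $\|w\|_{L^\infty}$ to $\|\nabla w\|_{L^\infty}$) all match the paper. The gap is in the middle: your proposed route to $\|w\|_{L^\infty}\le CL^{-3}$ via pointwise control of $\nabla u_2$ and $\nabla^2 u_2$ does not go through.

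The obstruction is that $F=e\sigma+2\pa_xe\,\psi'+\pa_x^2e\,\psi$ is only controlled in $L^1$ at the $L^{-3}$ scale, not in $L^p$ for any $p>1$. Property~\ref{prop:Fourier1} gives $\int_{\tilde J}|\sigma|\le CL^{-3}$ and Remark~\ref{rem:h-first} gives $\int_{\tilde J}|h''|\le CL^{-3}$ (hence the same for $\pa_x^2e$), but no more: since $\Omega$ is merely convex, $h$ may be piecewise linear and $h''$ a sum of point masses, so no rescaling of Calder\'on--Zygmund will produce pointwise $O(L^{-3})$ bounds on $\nabla u_2$ or $\nabla^2 u_2$ from $(\Delta+\la)u_2=-F$. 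In two dimensions $F\in L^1$ yields at best $\nabla^2 u_2$ in weak-$L^1$, which recovers only the $L^2$-gradient bound you already have from Property~\ref{prop:Fourier2}. (The same issue affects your claim that $|\nabla\pa_x\pa_y v_1|\le CL^{-3}$ pointwise: the $x$-derivative brings in $h'',f_1''$, which are not pointwise bounded.)

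The paper avoids this by never asking for pointwise control of $\nabla u_2$. Instead it argues by contradiction: take $(x_1,y_1)$ where $|w|$ is maximal in $\Omega_{1-\eps_0}$ and suppose $|w(x_1,y_1)|\ge 2C_1L^{-3}$. The elliptic estimate for the homogeneous equation $\Delta w+2\nabla v\cdot\nabla w=2(\la+|\nabla v|^2)w$ gives $|\nabla w|\le C_0\sup|w|$ near $(x_1,y_1)$, so $|w-\pa_x\pa_y v_1|\ge C_1L^{-3}$ on an entire square $B$ of side $c_0$ independent of $C_1$. Writing $w_2=\log u-\log u_1$ so that $\pa_x\pa_y w_2=w-\pa_x\pa_y v_1$, the fundamental theorem of calculus in $x$ converts this into a lower bound of order $c_0 C_1L^{-3}$ on the oscillation of $\pa_y w_2$ across $B$, and integrating in $y$ forces $\|\nabla w_2\|_{L^2(B)}\gtrsim C_1L^{-3}$. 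For $C_1$ large this contradicts the $L^2$-gradient bound on $u_2$ (hence on $w_2$) from Property~\ref{prop:Fourier2}. Thus the paper uses exactly the weak information you identified as insufficient, but feeds it into a contradiction rather than a direct estimate.
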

\begin{rem} \label{rem:mixed}
The estimates in Proposition \ref{prop:mixed} do not hold for the mixed derivatives of $u(x,y)$ itself. This  can be seen by
\begin{align*}
\pa_{x}\pa_{y}u(x,y) - u(x,y)\pa_{x}\pa_{y}\left(\log u(x,y)\right) = \frac{\pa_{x}u(x,y)\pa_{y}u(x,y)}{u(x,y)},
\end{align*}
since the right hand side will be $O(L^{-1})$ for some points in $\Om_{1-\eps_0}$. 
\end{rem}
\begin{rem} \label{rem:mixed2}
Since $\nabla u(x^*,y^*)=0$, the proposition does imply the estimate $|\pa_{x}\pa_{y}u(x^*,y^*)| \leq C_{0}L^{-3}$ at the maximum. This in particular means that the specified orientation of the domain $\Om$ (that the projection onto the $y$-axis is the smallest), agrees with the eigenvectors of the Hessian $D^2u(x^*,y^*)$ at the maximum, up to an error of size $O(L^{-3})$. 
\end{rem}
We will prove Proposition \ref{prop:mixed} in Section \ref{subsec:mixed} below and first use it to complete the proof of Theorem \ref{thm:third}: The third derivative estimates from Proposition \ref{prop:mixed} together with the estimates on the second derivatives of $u(x,y)$ from Proposition \ref{prop:second} immediately imply that
\begin{align*}
\left|\pa_{x}\pa_{y}^2u(x,y) \right| \leq C_{0}L^{-1}, \qquad \left|\pa_{x}^2\pa_{y}u(x,y) \right| \leq C_{0}L^{-2}
\end{align*}
in $\Om_{1-\eps_0}$. Thus, to prove Theorem \ref{thm:third} it is enough to show that 
\begin{align} \label{eqn:third1}
\left|\pa_{y}^3u(x,y) \right| \leq C_{0}, \qquad \left|\pa_{x}^3u(x,y) \right| \leq C_{0}L^{-3}
\end{align}
in the superlevel set $\Om_{1-\eps_0}$. The first estimate in \eqref{eqn:third1} follows from differentiating the eigenfunction equation in $y$. To obtain the estimate on $\pa_{x}^3u(x,y)$, we first see that
\begin{align*}
\pa_{x}^3\left(\log u(x,y)\right) + \pa_{x}\pa_{y}^2\left(\log u(x,y)\right) & = \pa_{x}\left(-\la +\left|\nabla \left(\log u(x,y)\right)\right|\right) \\
& = 2\pa_{x}^2\left(\log u(x,y)\right)\pa_{x}\left(\log u(x,y)\right) + 2\pa_{x}\pa_{y}\left(\log u(x,y)\right)\pa_{y}\left(\log u(x,y)\right). 
\end{align*}
Hence, by Propositions \ref{prop:second} and \ref{prop:mixed}, we have $|\pa_{x}^3(\log u(x,y))| \leq C_{0}L^{-3}$ in $\Omega_{1-\eps_0}$. Since
\begin{align*}
\pa_{x}^3u(x,y) - u(x,y)\pa_{x}^3\left(\log u(x,y)\right) = 3\frac{\pa_{x}^2u(x,y)\pa_{x}u(x,y)}{u(x,y)} -2  \frac{\left(\pa_{x}u(x,y)\right)^3}{u(x,y)^2} 
\end{align*}
we obtain the same bound on $\pa_{x}^3u(x,y)$ itself and this proves Theorem \ref{thm:third}.

\subsection{Proof of Proposition \ref{prop:mixed}} \label{subsec:mixed}

Let $W_{\eps_0} \subset \Om_{1-\eps_0}$ be a convex set with diameter and inner radius comparable to $\sqrt{\eps_0}$, where we fix $\eps_0>0$, with $\eps_0<\eps^*$ from Proposition \ref{prop:second}, so that the second derivative bounds hold for $u(x,y)$ in $\Om_{1-\eps_0}$. We will establish Proposition \ref{prop:mixed} by showing that the desired bounds hold in $W_{\eps_0}$ independently of its location within $\Om_{1-\eps_0}$. We set $v(x,y) = \pa_{x}\pa_y\left(\log u(x,y)\right)$, and we will use two crucial properties of this mixed derivative. First, the eigenfunction equation implies that $v(x,y)$ satisfies 
\begin{align} \label{eqn:mixed1}
\Delta v(x,y) + 2\nabla \log u(x,y).\nabla v(x,y) = 2\left(\la + \left|\nabla \log u(x,y)\right|^2\right)v(x,y). 
\end{align}
Second, setting 
\begin{align*}
v_1(x,y) = \pa_{x}\pa_{y}\left(\log u_1(x,y)\right) = \pa_{x}\pa_{y}\left(\log \psi(x) + \log e(x,y)\right) =  \pa_{x}\pa_{y}\left(\log e(x,y)\right),
\end{align*}
we have
\begin{align} \label{eqn:mixed2}
|v_1(x,y)| \leq  CL^{-3} \qquad \text{for } (x,y)\in W_{\eps_0}.
\end{align}
Since $\left|\nabla \log u(x,y)\right|$ is uniformly bounded, the equation in \eqref{eqn:mixed1} and $L^{p}$-elliptic regularity theory implies that, 
\begin{align} \label{eqn:mixed3}
\sup_{(x,y)\in \tfrac{1}{2}W_{\eps_0}} \left(|v(x,y)| + |\nabla v(x,y)|\right) \leq C_{0}\sup_{(x,y)\in W_{\eps_0}}|v(x,y)| . 
\end{align}
where $\tfrac{1}{2}W_{\eps_0}$ to the concentric half of $W_{\eps_0}$. Therefore, to prove Proposition \ref{prop:mixed} it is sufficient to bound $v(x,y)$ itself in $\Om_{1-\eps_0}$. We thus now pick $W_{\eps_0}$ so that it is centred at the point  where $|v(x,y)|$ attains its maximum in $\Om_{1-\eps_0}$, which we denote by $(x_1,y_1)$. Suppose that $|v(x_1,y_1)| \geq 2C_{1}L^{-3}$ for a large constant $C_{1}>0$ to be specified below. Then, if we set $v_2(x,y) = v(x,y) - v_1(x,y)$, by \eqref{eqn:mixed2} and \eqref{eqn:mixed3} we obtain the bound
\begin{align} \label{eqn:mixed4}
|v_2(x,y)| \geq C_{1}L^{-3}
\end{align}
for points $(x,y)$ in a square $B_{\eps_0}$ of side-length $c_{0}>0$ centred at $(x_1,y_1)$. Moreover, we may choose $c_{0}$ independently of the size of $C_{1}$.  We want to use \eqref{eqn:mixed4} to contradict the bounds on $\nabla u_2(x,y)$ from Property \ref{prop:Fourier2}. We first note that setting
\begin{align*}
w_2(x,y) = \log u(x,y) - \log u_1(x,y) = \log \left(1-\frac{u_2(x,y)}{u_1(x,y)}\right),
\end{align*}
we have $v_2(x,y) = \pa_{x}\pa_{y}w_2(x,y)$, and $\nabla w_2(x,y)$ satisfies the same bounds as $\nabla u_2(x,y)$ from Property \ref{prop:Fourier2}. In particular, this ensures that
\begin{align} \label{eqn:mixed5}
\left(\int_{B_{\eps_0}} \left|\nabla w_2(x,y)\right|^2\ud x \ud y\right)^{1/2} \leq CL^{-3}.
\end{align}
By the fundamental theorem of calculus, we can write 
$\pa_yw_2(x,y) = \pa_{y}w_2(\tilde{x},y) + \int_{\tilde{x}}^{x} v_2(s,y) \ud s,$
and so 
\begin{align*}
\int_{Y_{\eps_0}} \left(\pa_yw_2(x,y)\right)^2 \ud y \geq   \frac{1}{2}\int_{Y_{\eps_0}}  \left[\int_{\tilde{x}}^{x} v_2(s,y) \ud s\right]^2 \ud y -  \int_{Y_{\eps_0}}  \left(\pa_yw_2(\tilde{x},y)\right)^2 \ud y. 
\end{align*}
In the above $Y_{\eps_0}$ is the projection of the square $B_{\eps_0}$ onto the $y$-axis. Setting $\tilde{x} = x- \tfrac{1}{2}c_{0}$ and integrating in $x$ over the left half of $B_{\eps_0}$, implies that \eqref{eqn:mixed4} and \eqref{eqn:mixed5} are incompatible for $C_{1}>0$ sufficiently large. Thus, we obtain the desired bound on $|v(x,y)|$ in $\Om_{1-\eps_0}$ and this completes the proof of the proposition.

\end{document}